\definecolor{rouge}{rgb}{0.7,0.00,0.00}
\definecolor{vert}{rgb}{0.00,0.5,0.00}
\definecolor{bleu}{rgb}{0.00,0.00,0.8}
\newtheorem{theorem}{Theorem}[section]
\newtheorem*{theorem*}{Theorem}
\newtheorem{lemma}[theorem]{Lemma}
\newtheorem{proposition}[theorem]{Proposition}
\newtheorem{condition}{Condition}
\newtheorem{conditionA}{A\kern-0.1mm}
\theoremstyle{definition}
\newtheorem{remark}[theorem]{Remark}
\def \eref#1{\hbox{(\ref{#1})}}
\numberwithin{equation}{section}
\def\geq{\geqslant}
\def\leq{\leqslant}
\def\RR{\mathbb{R}}
\def\EE{\mathbb{E}}
\def\vare{{\varepsilon}}
\def \eref#1{\hbox{(\ref{#1})}}
\def\EE{\mathbb{ E}}
\DeclareMathOperator{\e}{e}
\begin{document}

\title[Averaging principle for slow-fast SPDEs with H\"{o}lder continuous coefficients]
{ Averaging principle for slow-fast stochastic partial differential equations with H\"{o}lder continuous coefficients}

\author{Xiaobin Sun}
\curraddr[Sun, X.]{ School of Mathematics and Statistics, Jiangsu Normal University, Xuzhou, 221116, China}
\email{xbsun@jsnu.edu.cn}

\author{Longjie Xie}
\curraddr[Xie, L.]{ School of Mathematics and Statistics, Jiangsu Normal University, Xuzhou, 221116, China}
\email{longjiexie@jsnu.edu.cn}

\author{Yingchao Xie}
\curraddr[Xie, Y.]{ School of Mathematics and Statistics, Jiangsu Normal University, Xuzhou, 221116, China}
\email{ycxie@jsnu.edu.cn}

\begin{abstract}
By using the technique of the Zvonkin's transformation and the classical Khasminkii's time discretization method, we prove the averaging principle for slow-fast stochastic partial differential equations with bounded and H\"{o}lder continuous drift coefficients. An example is also provided to explain our result.
\end{abstract}
\date{}
\subjclass[2000]{60H15; 35Q30; 70K70}
\keywords{Stochastic partial differential equation; Averaging principle; Zvonkin's transformation; H\"{o}lder continuous; Slow-fast}

\maketitle
\section{Introduction}

In this paper, we consider the following stochastic partial differential equation in a Hilbert space $H$:
\begin{equation}\left\{\begin{array}{l}\label{Main equation}
\displaystyle
dX^{\varepsilon}_t=\left[A X^{\varepsilon}_t
+B(X^{\varepsilon}_t, Y^{\varepsilon}_t)\right]dt+\sqrt{Q_1}d W^1_{t},\\
\displaystyle dY^{\varepsilon}_t=\frac{1}{\varepsilon}\left[AY^{\varepsilon}_t+F(X^{\varepsilon}_t, Y^{\varepsilon}_t)\right]dt
+\frac{1}{\sqrt{\varepsilon}}\sqrt{Q_2}d W^2_{t},\\
\end{array}\right.
\end{equation}
where $\varepsilon>0$ is a small parameter describing the ratio of the time scales of the slow component $X^{\varepsilon}_t$
and the fast component $Y^{\varepsilon}_t$, $A: \mathscr{D}(A)\to H$ is the infinitesimal generator of a linear
strongly continuous semigroup $\{e^{tA}\}_{t\geq 0}$, $B$ and $F$ are appropriate continuous functions, $Q_1$ and $Q_2$ are two non-negative selfadjoint bounded operators in $H$, $\{W^{1}_t\}_{t\geq 0}$ and $\{W^{2}_t\}_{t\geq 0}$ are $H$-valued mutually independent  cylindrical Wiener processes defined on a complete filtered probability space
$(\Omega,\mathscr{F},\{\mathscr{F}_{t}\}_{t\geq0},\mathbb{P})$.

\vspace{0.1cm}
The multiscale system \eref{Main equation}  has wide applications in material sciences,
fluid dynamics, biology, ecology, climate dynamics, see e.g., \cite{BR,EE,HKW,Ku} and the references therein.
The averaging principle is essential to describe the asymptotic behavior of the system as $\vare\to 0$, which says that the slow component will converge to the so-called averaged equation. This theory was first developed for the  ordinary differential equations (ODEs for short) by Bogoliubov and Mitropolsky \cite{BM}, and extended to the stochastic differential equations (SDEs for short) by Khasminskii \cite{K1}, see also \cite{L,LRSX1}. Since the averaging principle for a general class of stochastic reaction-diffusion systems with two time-scales was investigated by Cerrai and Freidlin in \cite{CF}, the averaging principle for slow-fast stochastic partial differential equations (SPDEs for short) has been drawn much attentions in the past decades, see  e.g., \cite{B1,C1,C2,CL,DSXZ,FLL,FWLL,GP,GP1,GP2,LRSX1,LRSX2,SZ,WR12,WRD12} and the references therein.

\vspace{0.1cm}
We point out that all the references  mentioned above assumed that the coupled coefficients $B$ and $F$ satisfy at least local Lipshitz continuous condition. However, it was shown by Da Prato and Flandoli \cite{DF} that system (\ref{Main equation}) can be strongly well-posed with only H\"older continuous drift coefficients, see also \cite{D-F-P-R,D-F-P-R-2} for further generalizations. Thus it is natural to ask that whether the averaging principle still holds under such kind of conditions. As far as we know, the averaging principle for stochastic system with irregular coefficients has not been studied  much yet. Even in the case of SDEs, there are only few results in this direction. Veretennikov \cite{V0} studied the averaging principle for SDEs under the assumptions that the drift coefficient of slow equation is only bounded and measurable with respect to slow variable, and all the other coefficients are global Lipschitz continuous. R\"ockner and authors \cite{RSX} studied the strong and weak convergence in the averaging principle for SDEs with H\"{o}lder coefficients drift, also see \cite{PV1,PV2} for the study of diffusion approximations for SDEs with singular coefficients.

\vspace{0.1cm}
The main purpose of this paper is to prove the strong convergence in the averaging principle for SPDE (\ref{Main equation}) with bounded and H\"{o}lder coefficients, i.e., for any $p\geq 1$,
\begin{align*}
\lim_{\vare\rightarrow 0}\mathbb{E} \left(\sup_{t\in[0,T]}|X_{t}^{\vare}-\bar{X}_{t}|^{p} \right)=0,
\end{align*}
where $\bar{X}_t$ is the solution of the corresponding averaged equation (see \eref{1.3} below). To the best of our knowledge, this seems to be the first paper which studies the averaging principle of slow-fast SPDEs with irregular coefficients. Nevertheless the SDEs with H\"{o}lder coefficients have been studied in \cite{RSX}, it will be quite different from the case of SPDE. For example, the main method used in \cite{RSX} is based on solving the Poisson equation with singular coefficient in finite dimension. However it is difficulty and nontrivial to study the Poisson equation with singular coefficient in infinite dimensional case. As a result, inspired from \cite{V0}, we intend to use the classical Khasminskii's time discretization and combine the Zvonkin's transformation, the latter is now widely used to study the strong well-posedness for S(P)DEs with singular coefficients, see e.g. \cite{DF,D-F-P-R,D-F-P-R-2,Kr-Ro}.

\vspace{1mm}
The rest of the paper is organized as follows. In Section \ref{Sec Main Result}, we first give some notations and suitable assumptions, then we present our main result and give a direct-viewing the idea of the key technique. Section \ref{Sec proof of result} is devoted to proving our main result. In Section \ref{Sec Exs}, we will give an example to illustrate the applicability of our result.

\vspace{1mm}
Throughout the paper, $C$, $C_p$, $C_T$ and $C_{p,T}$ denote positive constants which may change from line to line, where the subscript $p, T$ are used to emphasize  that the constant only depends on the parameters $p, T$.

\section{Notations and main results} \label{Sec Main Result}

\subsection{Notations and assumptions}

Let us first introduce some notations. The inner product and the norm of $H$, which are denoted by $\langle\cdot,\cdot\rangle$ and $|\cdot|$ respectively. We assume the following conditions throughout the paper:
\begin{conditionA}\label{A1}
$B, F: H\times H\rightarrow H$ are measurable and bounded. Moreover, there exist constants $\alpha,\beta,\gamma\in (0,1]$ and $C>0$ such that for any $x_1,x_2,y_1,y_2\in H$,
\begin{eqnarray*}
&&|B(x_1, y_1)-B(x_2, y_2)|\leq C\left(|x_1-x_2|^{\alpha} + |y_1-y_2|^{\beta}\right),\\
&&|F(x_1, y_1)-F(x_2, y_2)|\leq C|x_1-x_2|^{\gamma}+ L_F|y_1-y_2|.
\end{eqnarray*}
\end{conditionA}

\begin{conditionA}\label{A2}
$A$ is a selfadjoint operator satisfying $Ae_k=-\lambda_k e_k$ with $\lambda_k>0$ and $\lambda_k\uparrow \infty$, as $k\uparrow \infty$, where $\{e_k\}_{k\geq1}\subset \mathscr{D}(A)$ is a complete orthonormal basis of $H$.
\end{conditionA}

\begin{conditionA}\label{A3}
There exists $\zeta\in (0,1)$ such that $\sum_{k\geq1}\lambda_k^{\zeta-1}<\infty.$
\end{conditionA}

\begin{conditionA}\label{A4}
There exists $\theta\in (0,1)$ such that for any $T>0$,
\begin{eqnarray}
&&\int^T_0 r^{-\theta}\|e^{rA}\sqrt{Q_1}\|^2_{HS}dr\leq C_T,\label{A41}\\
&&\int^T_0 \|(-A)^{\theta/2}e^{rA}\sqrt{Q_1}\|^2_{HS}dr\leq C_T,\label{A42}\\
&&\int^\infty_0 \|e^{rA}\sqrt{Q_2}\|^2_{HS}dr<\infty,\label{A43}
\end{eqnarray}
where $C_T>0$ is a constant depending on $T$ and $\|\cdot\|_{HS}$ is the norm of the Hilbert-Schmidit operator.
\end{conditionA}

\begin{conditionA}\label{A5}
Let $Q_i(t):=\int^t_0 e^{sA}Q_i e^{sA^{*}}ds$ ($i=1,2$) be two trace class operators. The well-defined bounded operator $\Lambda_i(t):=Q_i^{-1/2}(t)e^{t A}$ satisfies
\begin{align}
\int_0^{\infty}e^{-\lambda t}\|\Lambda_i(t)\|^{1+\kappa_1} dt<\infty,\quad \forall \lambda>0,\label{A50}
\end{align}
for some $\kappa_1\geq \max\{\alpha\wedge\beta\wedge\gamma, 1-\alpha\wedge(\beta\gamma)\}$, where $\|\cdot\|$ is the operator norm. Moreover, there exists $\kappa_2\in(0,1/2)$ such that
\begin{eqnarray}
\int^{\infty}_0 e^{-\lambda t}\|(-A)^{\kappa_2}\Lambda_1(t)\|dt<\infty,\quad \forall \lambda>0.\label{A51}
\end{eqnarray}
\end{conditionA}

\begin{conditionA}\label{A6} The smallest eigenvalue $\lambda_1$ of $A$ and the Lispchitz constants $L_{F}$ satisfies
$$
\lambda_{1}-L_{F}>0.
$$
\end{conditionA}

\begin{remark}
It seems a little strong that $F(x,y)$ is Lipschitz continuous with respect to $y$ uniformly for $x$ in \ref{A1}, which is needed to prove the H\"{o}lder continuous of the averaged coefficients $\bar{B}$ (see the detailed explantation in Remark \ref{R3.7} below). The assumptions \ref{A2}-\ref{A5} ensure the existence and uniqueness of the solution and the estimates of system \eref{Main equation}. The condition \ref{A6} is a strong dissipative condition, which can guarantee the existence and uniqueness of the invariant measure and the exponential ergodicity for the transition semigroup of the frozen equation.
%%%The existence and uniqueness of invariant measure for the frozen equation and that exponential ergodicity do not need A6, see [3]. We only need A6 in the proof of Lem 3.4.

\end{remark}
\begin{remark}
Note that the covariance operator $Q_1$ may not be a trace class operator.  The existence and uniqueness of the mild solution can be proved, but the It\^{o}'s formula can not be used directly in this case. However, the assumption of $Q_1(t)$ being a trace class operator is enough for applying the Zvonkin transform by the approximations, see e.g. \cite{DF}.
\end{remark}

\vspace{0.3cm}
Given $\alpha\in (0,1]$, denoted by $C^{\alpha}_b(H,H)$ the space of all bounded and H\"{o}lder continuous functions $G(x): H\rightarrow H$ with index $\alpha$ and norm
$$
\|G\|_{C^{\alpha}_b}:=\|G\|_{\infty}+\|G\|_{C^{\alpha}},
$$
where  $\|G\|_{\infty}:=\sup_{x\in H}|G(x)|$ and $\|G\|_{C^{\alpha}}:=\sup_{x\neq y\in H}\frac{|G(x)-G(y)|}{|x-y|^{\alpha}}$.

For any $s\in\RR$, we define
 $$H^s:=\mathscr{D}((-A)^{s/2}):=\left\{u=\sum_{k}u_ke_k: u_k=\langle u,e_k\rangle\in \mathbb{R},~\sum_k\lambda_k^{s}u_k^2<\infty\right\}$$
 and
 $$(-A)^{s/2}u:=\sum_k\lambda_k^{s/2} u_ke_k,~~u\in\mathscr{D}((-A)^{s/2}),$$
with the associated norm
\begin{eqnarray*}
\|u\|_{s}:=|(-A)^{s/2}u|=\sqrt{\sum_k\lambda_k^{s} u^2_k}.
\end{eqnarray*}
It is easy to see that $H^{0}=H$ and  $H^{-s}$ is the dual space of $H^s$. The dual action will also be denoted by $\langle\cdot,\cdot\rangle$ and $\|\cdot\|$ the operator norm without confusion.

Under the above conditions, one can check that for any $\theta>0$, there exists a constant $C_{\theta}>0$ such that

\begin{eqnarray}
&&|e^{tA}x|\leq e^{-\lambda_1 t}|x|, \quad x\in H, t\geq 0;\label{SP1}\\
&&\|e^{tA}x\|_\theta\leq C_{\theta} t^{-\frac{\theta}{2}}|x|, \quad x\in H, t>0;\label{SP2}\\
&&| e^{At}x-x | \leq C_{\theta}t^{\frac{\theta}{2}}\|x\|_{\theta},\quad x\in \mathscr{D}((-A)^{\frac{\theta}{2}}), t\geq 0.\label{SP3}\\
&&| e^{At}x-e^{As}x | \leq C_{\theta}\frac{(t-s)^{\theta}}{s^{\theta}}|x|,\quad x\in H, t>s> 0.\label{SP4}
\end{eqnarray}

\subsection{Main result}

The main result of this paper is as follows.
\begin{theorem}\label{main result 1}
Assume that the conditions \ref{A1}-\ref{A6} hold. Then for any $x, y\in H$, $p\geq1$ and $T>0$, we have
\begin{align}
\lim_{\vare\rightarrow 0}\mathbb{E} \left(\sup_{t\in[0,T]}|X_{t}^{\vare}-\bar{X}_{t}|^{p} \right)=0,\label{2.2}
\end{align}
where $\bar{X}_t$ is the solution of the following averaged equation:
\begin{equation}
d\bar{X}_{t}=A\bar{X}_{t}dt+\bar{B}(\bar{X}_{t})dt+\sqrt{Q_1}d W^1_{t},\quad
\bar{X}_{0}=x,\label{1.3}
\end{equation}
with $\bar{B}(x)=\int_{H}B(x,y)\mu^{x}(dy)$, and $\mu^{x}$ is the unique invariant measure of the transition semigroups for the  frozen equation
\begin{eqnarray}\label{FEQ1}
dY_{t}=[AY_{t}+F(x,Y_{t})]dt+\sqrt{Q_2}d{W}_{t}^{2},\quad Y_{0}=y.
\end{eqnarray}
\end{theorem}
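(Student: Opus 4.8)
The proof combines two tools announced in the introduction: Khasminskii's time discretization, which averages out the fast motion, and the Zvonkin transformation, which converts the merely H\"{o}lder drift of the slow equation into a Lipschitz one so that a Gronwall argument can be closed. I would organize it as follows.

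\emph{Preliminaries on the frozen equation and a priori bounds.} First I would record the ergodic properties of \eref{FEQ1}. Condition \ref{A6} gives the strict dissipativity $\lambda_1 - L_F>0$, so the transition semigroup of \eref{FEQ1} admits a unique invariant measure $\mu^x$ and two solutions driven by the same noise satisfy $\EE|Y_t^{x,y_1}-Y_t^{x,y_2}|^2\leq e^{-2(\lambda_1-L_F)t}|y_1-y_2|^2$. Combining this exponential contraction with the $\gamma$-H\"{o}lder dependence of $F$ on $x$ from \ref{A1}, I would show that $x\mapsto\mu^x$ is H\"{o}lder continuous in a Wasserstein sense, whence the averaged drift $\bar{B}(x)=\int_H B(x,y)\mu^x(dy)$ is bounded and H\"{o}lder continuous with exponent $\eta=\alpha\wedge(\beta\gamma)$; this is the precise point where the uniform Lipschitz continuity of $F$ in $y$ is used (cf. Remark \ref{R3.7}), and it matches the exponent $1-\alpha\wedge(\beta\gamma)$ appearing in \eref{A50}. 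In parallel I would establish the uniform-in-$\varepsilon$ moment bounds $\sup_\varepsilon\sup_{t\leq T}(\EE|X_t^\varepsilon|^p+\EE|Y_t^\varepsilon|^p)\leq C_{p,T}$ together with a H\"{o}lder-in-time estimate $\EE|X_t^\varepsilon-X_s^\varepsilon|^p\leq C_{p,T}|t-s|^{\kappa p}$ for the slow component, the latter being needed to freeze $X^\varepsilon$ on the discretization grid.

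\emph{Zvonkin transformation.} Let $\cL$ be the Ornstein--Uhlenbeck generator associated with $A$ and $Q_1$, and consider the Kolmogorov equation $\lambda u_\lambda-\cL u_\lambda-\langle\bar{B},Du_\lambda\rangle=\bar{B}$, solved by $u_\lambda(x)=\int_0^\infty e^{-\lambda t}\bar{P}_t\bar{B}(x)\,dt$, where $\bar{P}_t$ is the transition semigroup of \eref{1.3}. Using the gradient estimates flowing from \eref{A50}--\eref{A51}, whose integrability exponents $\kappa_1,\kappa_2$ are tuned to $\eta$, I would prove that for $\lambda$ large $u_\lambda$ is bounded with $\|Du_\lambda\|_\infty\leq 1/2$, with $Du_\lambda$ H\"{o}lder and $(-A)^{\kappa_2}Du_\lambda$ bounded. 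Then $\Phi_\lambda:=\mathrm{id}+u_\lambda$ is a bi-Lipschitz homeomorphism of $H$ close to the identity, so it suffices to estimate $\Phi_\lambda(X_t^\varepsilon)-\Phi_\lambda(\bar{X}_t)$. Applying It\^{o}'s formula (justified by approximation, since $Q_1(t)$ is trace class) to $u_\lambda(X_t^\varepsilon)$ and using the Kolmogorov equation to cancel the singular drift yields
\[
d\Phi_\lambda(X_t^\varepsilon)=[AX_t^\varepsilon+\lambda u_\lambda(X_t^\varepsilon)]dt+[I+Du_\lambda(X_t^\varepsilon)][B(X_t^\varepsilon,Y_t^\varepsilon)-\bar{B}(X_t^\varepsilon)]dt+[I+Du_\lambda(X_t^\varepsilon)]\sqrt{Q_1}dW_t^1,
\]
and the same identity for $\bar{X}_t$ but without the middle (averaging) term. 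Subtracting, the dangerous drift difference $\bar{B}(X_t^\varepsilon)-\bar{B}(\bar{X}_t)$ has been traded for the genuinely Lipschitz term $\lambda[u_\lambda(X_t^\varepsilon)-u_\lambda(\bar{X}_t)]$.

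\emph{Khasminskii discretization of the averaging term.} The remaining obstacle is $\mathcal{I}_t^\varepsilon:=\int_0^t[I+Du_\lambda(X_s^\varepsilon)][B(X_s^\varepsilon,Y_s^\varepsilon)-\bar{B}(X_s^\varepsilon)]ds$. Here I would partition $[0,T]$ into blocks of length $\delta=\delta(\varepsilon)$ with $\delta\to 0$ and $\delta/\varepsilon\to\infty$, and introduce the auxiliary fast process $\hat{Y}^\varepsilon$ that on each $[t_k,t_{k+1})$ solves the dynamics of \eref{FEQ1} with the slow input frozen at $X_{t_k}^\varepsilon$. Three successive replacements control $\mathcal{I}_t^\varepsilon$: replacing $X_s^\varepsilon$ by $X_{t_k}^\varepsilon$ inside $B$ and $\bar{B}$, with error governed by the $\alpha$- and $\eta$-H\"{o}lder continuity together with the time-regularity of $X^\varepsilon$; replacing $Y_s^\varepsilon$ by $\hat{Y}_s^\varepsilon$, with error controlled by the dissipativity of \eref{FEQ1} and the $\gamma$-H\"{o}lder dependence of $F$ on $x$; and, on each block, rescaling time and invoking the exponential ergodicity to bound $\EE|\frac1\delta\int_{t_k}^{t_{k+1}}B(X_{t_k}^\varepsilon,\hat{Y}_s^\varepsilon)ds-\bar{B}(X_{t_k}^\varepsilon)|^2$ by a quantity vanishing as $\varepsilon/\delta\to 0$. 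Since the prefactor $I+Du_\lambda$ is bounded, summing the blocks and optimizing in $\delta$ yields $\EE\sup_{t\leq T}|\mathcal{I}_t^\varepsilon|^p\to 0$.

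\emph{Conclusion and main obstacle.} Writing the mild form of $\Phi_\lambda(X_t^\varepsilon)-\Phi_\lambda(\bar{X}_t)$, I would bound $\EE\sup_{s\leq t}|\Phi_\lambda(X_s^\varepsilon)-\Phi_\lambda(\bar{X}_s)|^p$ using the contraction and smoothing of $e^{tA}$ from \eref{SP1}--\eref{SP2} for the $A$-term, the Lipschitz bound $\lambda\|Du_\lambda\|_\infty$ for the drift difference, the Burkholder--Davis--Gundy inequality for the stochastic integral whose integrand $[Du_\lambda(X^\varepsilon)-Du_\lambda(\bar{X})]\sqrt{Q_1}$ is estimated in Hilbert--Schmidt norm via the regularity $(-A)^{\kappa_2}Du_\lambda$ from \eref{A51} combined with \eref{A42}, and the averaging estimate of the previous step. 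Gronwall's lemma then gives $\EE\sup_{t\leq T}|\Phi_\lambda(X_t^\varepsilon)-\Phi_\lambda(\bar{X}_t)|^p\to 0$, and the bi-Lipschitz property of $\Phi_\lambda$ converts this into \eref{2.2}. I expect the main difficulty to be twofold: establishing the sharp regularity of $u_\lambda$ — in particular the H\"{o}lder and weighted $(-A)^{\kappa_2}$ bounds on $Du_\lambda$ — in infinite dimensions under \ref{A4}--\ref{A5}, so that the transformed diffusion can be handled in Hilbert--Schmidt norm; and carrying out the block-by-block ergodic estimate while only assuming H\"{o}lder continuity of $B$ in the slow variable. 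The interplay between the H\"{o}lder exponents and the balancing of $\delta$ against $\varepsilon$ is the delicate technical core, and is precisely what forces the constraints on $\kappa_1$ in \eref{A50}.
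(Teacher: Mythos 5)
Your proposal follows essentially the same route as the paper: H\"older continuity of $\bar B$ with exponent $\alpha\wedge(\beta\gamma)$, the Zvonkin transform via the Kolmogorov equation $\lambda U-\bar{\mathscr{L}}U=\bar B$ with the resulting bounds on $DU$, $D^2U$ and $(-A)^{\kappa_2}DU$, Khasminskii blocks with the auxiliary process $\hat Y^{\vare}$ frozen on the grid, and a Gronwall closure after taking $\lambda$ large so that the Lipschitz constant of $U$ is small. The only substantive deviation is that you invoke a pointwise H\"older-in-time bound $\EE|X^{\vare}_t-X^{\vare}_s|^p\leq C|t-s|^{\kappa p}$, which fails near $t=0$ for a general initial datum $x\in H$; the paper instead uses the weaker time-integrated increment estimate $\EE\int_0^T|X^{\vare}_t-X^{\vare}_{t(\delta)}|^2\,dt\leq C_T\delta^{\theta}(1+|x|^2)$ of Lemma \ref{COX}, which suffices for all the replacement errors and avoids assuming $x\in H^{\theta}$.
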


\subsection{Idea of proof}

Since the coefficients of the system \eref{Main equation} are only H\"older continuous, the classical Khasminskii's time discreatization can't be used to prove our main result directly. Inspired from \cite{V0}, we shall use the Zvonkin's transformation to change the singular coefficients to regular ones. Such a technique is now well-known in the study of the well-posedness of S(P)DEs with singular coefficients. By a similar argument as in the \cite[Section 2]{DF}, we give a direct-viewing the idea of how to use the Zvonkin transformation, so we do not care about the rigor of the computations.

Consider the following PDE in $H$:
\begin{equation}\label{MPDE}
\lambda U(x)-\bar{\mathscr{L}}U(x)=\bar{B}(x),\quad x\in H,
\end{equation}
where $\lambda>0$ and $\bar{\mathscr{L}}$ is the infinitesimal generator of averaged equation, i.e.,
\begin{eqnarray}\label{gege}
\bar{\mathscr{L}}f(x)=\langle Ax, Df(x)\rangle+\langle \bar{B}(x), Df(x)\rangle+\tfrac{1}{2}\text{Tr}[D^2 f(x)Q_1].
\end{eqnarray}
If $U$ is a sufficiently regular solution, by It\^o's formula we have
\begin{eqnarray*}
dU(\bar{X}_t)=\!\!\!\!\!\!\!\!&&\lambda U(\bar X_t)dt-\bar B(\bar X_t)dt+DU(\bar X_t)\sqrt{Q_1} dW^1_t.
\end{eqnarray*}
As a result, we get
$$
\bar B(\bar X_t)dt=\lambda U(\bar X_t)dt-dU(\bar{X}_t)+DU(\bar X_t)\sqrt{Q_1} dW^1_t.
$$
We put this formula in equation (\ref{1.3}) and get
\begin{eqnarray*}
d\bar X_t=A\bar X_t dt+\lambda U(\bar X_t)dt-dU(\bar X_t)+(I+DU(\bar X_t))\sqrt{Q_1} dW^1_t,
\end{eqnarray*}
where $I$ is the identical operator. By variation of constant method and integration by parts formula, we get
\begin{eqnarray}
\bar X_t
=\!\!\!\!\!\!\!\!&&e^{tA}(x+U(x))+\int^t_0 e^{(t-s)A}\lambda U(\bar X_s)ds-U(\bar{X}_t)-\int^t_0 Ae^{(t-s)A}U(\bar X_s)ds\nonumber\\
&&+\int^t_0 e^{(t-s)A}(I+DU(\bar X_s))\sqrt{Q_1} dW^1_s.\label{FbarX}
\end{eqnarray}
Then by a similar argument, we also have
\begin{eqnarray}
X^{\vare}_t=\!\!\!\!\!\!\!\!&&e^{tA}(x+U(x))+\int^t_0 e^{(t-s)A}\lambda U(X^{\vare}_s)ds-U(X^{\vare}_t)-\int^t_0 Ae^{(t-s)A}U(X^{\vare}_s)ds\nonumber\\
&&\!\!+\int^t_0 e^{(t-s)A}(I+DU(X^{\vare}_s))\sqrt{Q_1}dW^1_s\nonumber\\
&&\!\!+\int^t_0 e^{(t-s)A}\langle (I+D U(X^{\vare}_s)), B(X^{\vare}_s, Y^{\vare}_s)-\bar{B}(X^{\vare}_s)\rangle ds.\label{FX}
\end{eqnarray}
Note that the non-regular drift $B$ has been removed in \eref{FbarX}. Although that the last  term in \eref{FX} is still non-regular, it is possible to be handled by a time discretization method and the exponential ergodicity of the transition semigroup of the frozen equation.

\section{Proof of main result} \label{Sec proof of result}

In this section, we are devoted to proving Theorem \ref{main result 1}. The proof consists of the following five subsections. In  Subsection 3.1, we show the well-posedness for system (1.1), denoted the unique solution by $(X^{\varepsilon}_t, Y^{\varepsilon}_t)$, and give  some a-priori estimates of the solution. In  Subsection 3.2, we study the frozen equation and its exponential ergodicity, which will be used in the final proof. The averaged equation and Zvonkin transformation are considered in  Subsection 3.3. In Subsection 3.4, we construct an auxiliary processes $(\hat{X}_{t}^{\varepsilon},\hat{Y}_{t}^{\varepsilon})\in H \times H$ and deduce an estimate of the difference process $Y^{\varepsilon}_t-\hat{Y}_{t}^{\varepsilon}$. Finally, we will give the detailed proof of Theorem \ref{main result 1}. We always assume \ref{A1}-\ref{A6} hold and the initial values $(x,y)\in H\times H$ are fixed in this section.

\subsection {Some a-priori estimates of \texorpdfstring{$(X^{\varepsilon}_t, Y^{\varepsilon}_t)$}{Lg}}
%In this subsection, we will prove some uniform bounds with respect to $\vare\in (0,1)$ for the $p$-moment of the solutions $(X_{t}^{\varepsilon}, Y_{t}^{\vare})$ to the system \eref{Main equation}

\begin{lemma} \label{PMY}
The system \eqref{Main equation} has a unique strong solution $(X^{\varepsilon},Y^{\varepsilon})$.
Moreover, for any $T>0$ and $p\geq1$, there exists a constant $C_{p,T}>0$ such that
\begin{align}
\sup_{\vare\in(0,1)}\mathbb{E}\left(\sup_{t\in[0,T]}|X_{t}^{\vare}|^{p}\right)\leq C_{p,T}\left(1+|x|^{p}\right)\label{F3.1}
\end{align}
and
\begin{align}
\sup_{\vare\in(0,1)}\sup_{t\geq 0}\mathbb{E}|Y_{t}^{\varepsilon}|^{p}\leq C_{p}\left(1+|y|^{p}\right).\label{ess}
\end{align}
\end{lemma}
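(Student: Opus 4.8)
The plan is to work throughout with the mild formulation of \eref{Main equation} and to exploit the boundedness of $B$ and $F$ from \ref{A1}, which makes the two a-priori bounds essentially decouple from the nonlinearities. For the strong well-posedness, since the drift coefficients are bounded, measurable and H\"older continuous and the noise is non-degenerate, I would obtain existence by a Girsanov/compactness argument and pathwise uniqueness by the Zvonkin transformation, exactly along the lines of \cite{DF,D-F-P-R}; these results can simply be invoked, since the genuinely new content of the lemma is the two moment estimates together with their uniformity in $\vare$.

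For \eref{F3.1} I would write the slow component as
\begin{align*}
X^\vare_t=e^{tA}x+\int_0^t e^{(t-s)A}B(X^\vare_s,Y^\vare_s)\,ds+\int_0^t e^{(t-s)A}\sqrt{Q_1}\,dW^1_s.
\end{align*}
By \eref{SP1} the first term is bounded by $|x|$, and since $|B|\le\|B\|_\infty$ the drift term is bounded deterministically by $\int_0^t e^{-\lambda_1(t-s)}\|B\|_\infty\,ds\le \|B\|_\infty/\lambda_1$, so that no Gronwall argument is needed. The whole difficulty thus sits in the stochastic convolution $Z^1_t:=\int_0^t e^{(t-s)A}\sqrt{Q_1}\,dW^1_s$, for which I would control $\mathbb{E}\sup_{t\le T}|Z^1_t|^p$ by the factorization method: writing $Z^1_t=c_\alpha\int_0^t(t-s)^{\alpha-1}e^{(t-s)A}\Upsilon_s\,ds$ with $\Upsilon_s=\int_0^s(s-r)^{-\alpha}e^{(s-r)A}\sqrt{Q_1}\,dW^1_r$ and $\alpha=\theta/2$, the Gaussianity of $\Upsilon_s$ combined with \eref{A41} gives $\sup_{s\le T}\mathbb{E}|\Upsilon_s|^p\le C_{p,T}$, and a H\"older inequality in $s$ (valid once $p>1/\alpha$, the remaining range of $p$ following by Jensen's inequality) converts the convolution into the desired supremum bound. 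Taking $p$-th moments and noting that none of $A$, $B$, $Q_1$, $W^1$ depends on $\vare$ yields \eref{F3.1} uniformly in $\vare$.

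For \eref{ess} I would use the mild form of the fast component,
\begin{align*}
Y^\vare_t=e^{tA/\vare}y+\frac{1}{\vare}\int_0^t e^{(t-s)A/\vare}F(X^\vare_s,Y^\vare_s)\,ds+\frac{1}{\sqrt\vare}\int_0^t e^{(t-s)A/\vare}\sqrt{Q_2}\,dW^2_s.
\end{align*}
Again \eref{SP1} bounds the first term by $e^{-\lambda_1 t/\vare}|y|\le|y|$ and the boundedness of $F$ bounds the drift by $\|F\|_\infty/\lambda_1$, both uniformly in $t$ and $\vare$. For the stochastic convolution $Z^{2,\vare}_t$ the key computation is the change of variables $u=r/\vare$ in its second moment,
\begin{align*}
\mathbb{E}|Z^{2,\vare}_t|^2=\frac{1}{\vare}\int_0^t\|e^{rA/\vare}\sqrt{Q_2}\|_{HS}^2\,dr=\int_0^{t/\vare}\|e^{uA}\sqrt{Q_2}\|_{HS}^2\,du\le\int_0^\infty\|e^{uA}\sqrt{Q_2}\|_{HS}^2\,du,
\end{align*}
which is finite by \eref{A43} and independent of both $t$ and $\vare$; since $Z^{2,\vare}_t$ is Gaussian, all of its $p$-th moments are controlled by the second, giving \eref{ess}.

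The main obstacle is the supremum-in-time estimate of the convolution $Z^1$ in the regime where $Q_1$ need not be of trace class, so that It\^o's formula for $|X^\vare_t|^2$ is unavailable and one must instead rely on the factorization representation and the regularizing integrability \eref{A41}; pinning down the admissible range of $\alpha$ and $p$ there, and confirming that the resulting constants are genuinely independent of $\vare$, is the delicate bookkeeping. By contrast, the bound \eref{ess} is comparatively soft, the only point requiring care being that the $1/\vare$ and $1/\sqrt\vare$ time-scalings cancel exactly after the substitution $u=r/\vare$, leaving a bound uniform over all $t\ge0$ and $\vare\in(0,1)$.
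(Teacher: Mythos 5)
Your proposal is correct and follows essentially the same route as the paper: well-posedness by invoking the Zvonkin-type results of \cite{DF}, the bound \eref{F3.1} via the mild formulation, the boundedness of $B$, and the factorization method with exponent $\theta/2$ together with \eref{A41} (large $p$ first, then H\"older/Jensen), and the bound \eref{ess} via \eref{SP1}, the boundedness of $F$, and the exact cancellation of the $1/\vare$ scalings after the substitution $r\mapsto r/\vare$ using \eref{A43}. The only cosmetic differences are that the paper reformulates the system on $H\times H$ before citing \cite{DF} and controls the fast stochastic convolution by the Burkholder--Davis--Gundy inequality rather than by Gaussianity, which yields the same bound.
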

\begin{proof}
Let $\mathcal{H}:=H\times H$ be the product Hilbert space. Rewrite the system \eref{Main equation} for $Z^{\varepsilon}_t=(X^{\varepsilon}_t,Y^{\varepsilon}_t)$ as
\begin{eqnarray*}
dZ^{\varepsilon}_t=\tilde{A}Z^{\varepsilon}_tdt+B^{\vare}(Z^{\varepsilon}_t)dt+\sqrt{Q}dW_t,\quad Z^{\varepsilon}_0=(x,y)\in \mathcal{H},
\end{eqnarray*}
where $W_t:=(W_t^{1},W_t^{2})$  is a $\mathcal{H}$-valued cylindrical-Wiener process, $Q$ is a bounded operator in  $\mathcal{H}$, which is denoted by $Qz=(Q_1x, Q_2y)$, for $z=(x,y)\in \mathcal{H}$, and
\begin{eqnarray*}
&&\tilde{A}Z^{\varepsilon}_t=\left(AX^{\varepsilon}_t,\frac{1}{\varepsilon}AY^{\varepsilon}_t\right),
\\&&B^{\vare}(Z^{\varepsilon}_t)=\left(B(X^{\varepsilon}_t,Y^{\varepsilon}_t),\frac{1}{\varepsilon}F(X^{\varepsilon}_t,Y^{\varepsilon}_t)\right).
\end{eqnarray*}
It is easy to see that $B^{\vare}$ is bounded and H\"{o}lder continuous with index $\alpha\wedge\beta\wedge\gamma$ in $\mathcal{H}$, i.e.,
$$
\|B^{\vare}(z_1)-B^{\vare}(z_2)\|_{\mathcal{H}}\leq C_{\vare}\|z_1-z_2\|^{\alpha\wedge\beta\wedge\gamma}_{\mathcal{H}},\quad z_1,z_2\in \mathcal{H}.
$$
Then under the assumptions \ref{A2}-\ref{A5}, the existence and uniqueness of strong solution in the mild sense for system \eqref{Main equation} follows by \cite[Theorem 7]{DF}.
%Note that we consider the time-independent SDE, so the condition (6) in \cite{DF} is replaced by condition \eref{A50} here, in fact these two conditions are the same essentially.

Next, we intend to prove the a-priori estimates of the solution. By H\"{o}lder inequality, it suffices to prove (\ref{F3.1})  for large enough $p$.
Using the factorization method, for $\theta \in (0, 1)$ in \ref{A4}, we write
$$
W_{A}(t):=\int_{0}^{t}e^{(t-s)A}\sqrt{Q_1}dW^{1}_s=\frac{\sin( \pi\theta/2)}{\pi}\int^t_0 e^{(t-s)A}(t-s)^{\theta/2-1}Z_sds,
$$
where
%$\beta$ is the one in \textbf{(H3)} and
$$
Z_s=\int^s_0 e^{(s-r)A}(s-r)^{-\theta/2}\sqrt{Q_1}dW^{1}_r.
$$

Choosing $p>1$ large enough such that $\frac{p(1-\theta/2)}{p-1}<1$, we get for any $t\in [0,T]$,
\begin{eqnarray*}
|W_{A}(t)|\leq C\left(\int^t_0 (t-s)^{-\frac{p(1-\theta/2)}{p-1}}ds\right)^{\frac{p-1}{p}}\|Z\|_{L^{p}(0,T; H)}
\leq C_{p}t^{\frac{\theta}{2}-\frac{1}{p}}\|Z\|_{L^{p}(0,T; H)},
\end{eqnarray*}
where $\|Z\|_{L^{p}(0,T; H)}:=\left(\int^T_0 |Z_t|^p dt\right)^{1/p}$. Then it implies
\begin{eqnarray}
\sup_{0 \leq t\leq T}|W_{A}(t)|^{p}\leq\!\!\!\!\!\!\!\!&&C_{p,T}\|Z\|^{p}_{L^{p}(0,T; H)}.\label{4.3}
\end{eqnarray}

Note that $Z_s\sim N(0, \tilde{Q}_s)$,  which is a Gaussian random variable with mean zero and covariance operator given by
$$
\tilde{Q}_s x=\int^s_0 r^{-\theta}e^{rA}Q_1 e^{rA^{*}}xdr.
$$
Then for any $p\geq1$,  $s\in[0, T]$, we use the condition \eref{A41} and follow the proof of \cite[Corollary 2.17]{Daz1}, it is easy to see that
\begin{align*}
\sup_{s\in[0, T]}\EE|Z_s|^{p}\leq &\   C_{p}\sup_{s\in[0, T]}[\text{Tr}(\tilde{Q}_s)]^{p/2}
= C_p\left(\int^T_0 r^{-\theta}\|e^{rA}\sqrt{Q_1}\|^2_{HS} dr\right)^{p/2} \leq C_{p, T},
\end{align*}
which yields
\begin{align}
\EE\left[\sup_{0 \leq t\leq T}|W_{A}(t)|^{p}\right]\leq C_{p,T}\EE\|Z\|^{p}_{L^{p}(0,T; H)}= &\  C_{p,T}\EE\int^T_0 |Z_s|^{p}ds\leq C_{p, T}.\label{CV}
\end{align}

By the boundedness of $B$ and \eref{CV},  it is easy to see
\begin{eqnarray*}
\EE\left(\sup_{t\in[0,T]}|X_{t}^{\varepsilon}|^{p}\right)\leq\!\!\!\!\!\!\!\!&&C_{p,T}(|x|^{p}+1)+C_{p,T}\EE\left[\sup_{t\in[0, T]}|W_A(t)|^{p}\right]\\
\leq\!\!\!\!\!\!\!\!&&C_{p,T}(|x|^{p}+1).
\end{eqnarray*}

Now, we proceed to show estimate (\ref{ess}). By the boundedness of $F$, property \eref{SP1} and Burkholder-Davis-Gundy's inequality, for any $t\geq 0$ we have
\begin{eqnarray*}
\EE|Y_{t}^{\varepsilon}|^{p}\leq\!\!\!\!\!\!\!\!&&C_p\left[e^{-t\lambda_1 p/\vare }|y|^{p}+\left(\int^t_0 \frac{1}{\vare}e^{-s \lambda_1/\vare}\|F\|_{\infty}ds\right)^{p}+\EE\left|\frac{1}{\sqrt{\vare}}\int^t_0 e^{(t-s)A/\vare}\sqrt{Q_2}d W^2_s\right|^{p}\right]\\
\leq\!\!\!\!\!\!\!\!&&C_{p}(1+|y|^{p})+\frac{C_p}{\vare^{p/2}}\left(\int^t_0 \left\|e^{(t-s)A/\vare}\sqrt{Q_2}\right\|^2_{HS}d s\right)^{p/2}\\
\leq\!\!\!\!\!\!\!\!&&C_{p}(1+|y|^{p})+C_p\left(\int^{\infty}_0 \left\|e^{rA}\sqrt{Q_2}\right\|^2_{HS}d r\right)^{p/2},
\end{eqnarray*}
which in turn implies the desired result by condition \eref{A43}.
The proof is complete.
\end{proof}

\begin{lemma} \label{SOX}
For any $t\in (0, T]$ and $p\geq 1$, there exists a constant $C_{p,T}>0$ such that
\begin{align}
\sup_{\varepsilon \in (0,1)} \mathbb{E}\|X_{t}^{\varepsilon}\|_{\theta}^{p}
\leq C_{p,T}t^{-\frac{\theta p}{2}}(|x|^{p}+1),\label{4.5}
\end{align}
where $\theta$ is given in \ref{A4}.
\end{lemma}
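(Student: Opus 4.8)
The plan is to work directly with the mild formulation of the slow equation,
\begin{align*}
X^{\vare}_t=e^{tA}x+\int^t_0 e^{(t-s)A}B(X^{\vare}_s,Y^{\vare}_s)\,ds+W_A(t),
\end{align*}
where $W_A(t)=\int^t_0 e^{(t-s)A}\sqrt{Q_1}\,dW^1_s$ is the stochastic convolution already introduced in the proof of Lemma \ref{PMY}, and to estimate the $\|\cdot\|_{\theta}$-norm of each of the three terms separately. Applying $(-A)^{\theta/2}$ and the triangle inequality, it suffices to bound $\EE\|e^{tA}x\|_{\theta}^p$, $\EE\big\|\int^t_0 e^{(t-s)A}B(X^{\vare}_s,Y^{\vare}_s)\,ds\big\|_{\theta}^p$ and $\EE\|W_A(t)\|_{\theta}^p$, then recombine them via the elementary inequality $|a+b+c|^p\leq C_p(|a|^p+|b|^p+|c|^p)$.

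For the first term, the smoothing property \eref{SP2} gives $\|e^{tA}x\|_{\theta}\leq C_{\theta}t^{-\theta/2}|x|$, which is precisely the source of the singular factor $t^{-\theta p/2}$ in the claimed bound. For the drift term I would use the boundedness of $B$ from \ref{A1} together with \eref{SP2}: since $|B(X^{\vare}_s,Y^{\vare}_s)|\leq\|B\|_{\infty}$, one gets
\begin{align*}
\Big\|\int^t_0 e^{(t-s)A}B(X^{\vare}_s,Y^{\vare}_s)\,ds\Big\|_{\theta}\leq C_{\theta}\|B\|_{\infty}\int^t_0 (t-s)^{-\theta/2}\,ds=C_{\theta}\|B\|_{\infty}\tfrac{2}{2-\theta}t^{1-\theta/2},
\end{align*}
which is finite because $\theta\in(0,1)$ keeps $(t-s)^{-\theta/2}$ integrable, and is bounded by a constant $C_{\theta,T}$ on $(0,T]$. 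This estimate is deterministic and, crucially, does not see the fast component $Y^{\vare}$ at all, which is what will make the final bound uniform in $\vare$.

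The main technical point is the stochastic convolution term $\EE\|W_A(t)\|_{\theta}^p=\EE|(-A)^{\theta/2}W_A(t)|^p$. By \eref{A42} the operator-valued integrand $(-A)^{\theta/2}e^{rA}\sqrt{Q_1}$ is square-integrable in Hilbert–Schmidt norm on $[0,T]$, so $(-A)^{\theta/2}W_A(t)=\int^t_0(-A)^{\theta/2}e^{(t-s)A}\sqrt{Q_1}\,dW^1_s$ is a well-defined centered Gaussian vector in $H$ whose covariance has trace $\int^t_0\big\|(-A)^{\theta/2}e^{rA}\sqrt{Q_1}\big\|^2_{HS}\,dr\leq C_T$. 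Exactly as in the proof of Lemma \ref{PMY} (following \cite[Corollary 2.17]{Daz1}), the $p$-th moment of a centered $H$-valued Gaussian is controlled by the $p/2$ power of the trace of its covariance, whence $\EE\|W_A(t)\|_{\theta}^p\leq C_{p,T}$, again independently of $\vare$. Note that here the factorization method is not needed, since only a pointwise-in-$t$ moment bound is required rather than a supremum estimate.

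Collecting the three bounds yields $\EE\|X^{\vare}_t\|_{\theta}^p\leq C_{p,T}\big(t^{-\theta p/2}|x|^p+t^{(1-\theta/2)p}+1\big)$. Since $t\in(0,T]$ gives $t^{-\theta p/2}\geq T^{-\theta p/2}$, every constant term (including $t^{(1-\theta/2)p}\leq T^{(1-\theta/2)p}$) can be absorbed into $C_{p,T}t^{-\theta p/2}$, producing the stated estimate $C_{p,T}t^{-\theta p/2}(|x|^p+1)$. I do not expect a genuine obstacle here; the only step requiring care is verifying that $W_A(t)$ lives in $H^{\theta}=\mathscr{D}((-A)^{\theta/2})$ and controlling its $\theta$-norm moments, which is exactly what assumption \eref{A42} is designed to supply.
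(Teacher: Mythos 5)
Your proposal is correct and follows essentially the same route as the paper: the same three-term decomposition of the mild formulation, with \eref{SP2} for the semigroup and drift terms and \eref{A42} for the stochastic convolution (the paper bounds the latter via the Burkholder--Davis--Gundy inequality rather than the Gaussian moment argument, but this is an immaterial variation for a fixed-time moment bound).
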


\begin{proof}
Recall that
\begin{align*}
X^{\varepsilon}_t=e^{tA}x+\int^t_0e^{(t-s)A}B(X^{\varepsilon}_s, Y^{\varepsilon}_s)ds+\int^t_0 e^{(t-s)A}\sqrt{Q_1}dW^{1}_s.
\end{align*}
For the first term, we have by \eref{SP2} that
\begin{eqnarray}
\|e^{At}x\|_{\theta}^{p}\leq C t^{-\frac{\theta p}{2}}|x|^{p}.\label{H1}
\end{eqnarray}
For the second term, by \eref{SP2} and the boundedness of $B$, we can get
\begin{eqnarray}
\mathbb{E}\Big\|\int^t_0e^{(t-s)A}B(X^{\varepsilon}_s, Y^{\varepsilon}_s)ds\Big\|_{\theta}^{p}\leq C\Big[
\int^t_0 (t-s)^{-\frac{\theta}{2}}ds\Big]^{p}\leq C_{p,T}. \label{H2}
\end{eqnarray}
For the third term, by Burkholder-Davis-Gundy's inequality and condition \eref{A42}, we have for any $t\in [0, T]$,
\begin{eqnarray}
\mathbb{E}\Big\|\int^t_0 e^{(t-s)A}\sqrt{Q_1}dW^{1}_s\Big\|_{\theta}^{p}\leq\!\!\!\!\!\!\!\!&& C_p\left(\int^t_0 \|(-A)^{\theta/2}e^{(t-s)A}\sqrt{Q_1}\|^2_{HS}ds\right)^{p/2}\nonumber\\
\leq \!\!\!\!\!\!\!\!&&C_p\left( \int^T_0 \|(-A)^{\theta/2}e^{sA}\sqrt{Q_1}\|^2_{HS}ds\right)^{p/2}\leq C_{p,T} .\label{H3}
\end{eqnarray}
Hence, the proof is completed by combining $\eref{H1}$-$\eref{H3}$.
\end{proof}

Usually, the H\"{o}lder continuity of $X_{t}^{\varepsilon}$ in time plays an important role in the method of time discretization (see \cite[Proposition 4.4]{C1}, \cite[Lemma 3,4]{DSXZ} and \cite[Proposition 9]{GP}), then the initial value $x\in H^{\theta}$ will be assumed for some $\theta>0$. However inspired from \cite{LRSX2}, studying the H\"{o}lder continuity can be replaced by studying the integral of the time increment of $X_{t}^{\varepsilon}$, which is weaker than the H\"{o}lder continuity but  enough for our purpose, and it only needs initial value $x\in H$ for advantage.

\begin{lemma} \label{COX}
For any $T>0$ and $p\geq 1$, there exists a constant $C_{p,T}>0$ such that for any $\vare\in(0,1)$ and $\delta>0$ small enough,
\begin{align}
\mathbb{E}\left[\int^{T}_0|X_{t}^{\varepsilon}-X_{t(\delta)}^{\varepsilon}|^2 dt\right]\leq C_{T}\delta^{\theta}(1+|x|^2),\label{F3.7}
\end{align}
where $t(\delta):=[\frac{t}{\delta}]\delta$ and $[s]$ denotes the integer part of $s$ and  $\theta$ is given in \ref{A4}.
\end{lemma}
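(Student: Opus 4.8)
The plan is to start from the mild formulation of $X^{\vare}$ recalled in the proof of Lemma \ref{SOX} and exploit the cocycle (semigroup) property: writing $r:=t(\delta)\le t$, the identity $e^{(t-r)A}e^{(r-s)A}=e^{(t-s)A}$ gives
\begin{equation*}
X^{\vare}_t-X^{\vare}_r=\big(e^{(t-r)A}-I\big)X^{\vare}_r+\int_r^t e^{(t-s)A}B(X^{\vare}_s,Y^{\vare}_s)\,ds+\int_r^t e^{(t-s)A}\sqrt{Q_1}\,dW^1_s.
\end{equation*}
By Fubini it suffices to bound $\int_0^T\EE|X^{\vare}_t-X^{\vare}_{t(\delta)}|^2\,dt$, and using $|a+b+c|^2\le 3(|a|^2+|b|^2+|c|^2)$ I would estimate the three resulting terms separately.

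For the drift term, boundedness of $B$ and the contraction property \eref{SP1} give $\big|\int_r^t e^{(t-s)A}B\,ds\big|\le\|B\|_\infty(t-r)\le\|B\|_\infty\delta$, so its contribution to the time integral is at most $C_T\delta^2\le C_T\delta^{\theta}$ for $\delta\in(0,1)$. For the stochastic convolution, It\^o's isometry yields $\EE\big|\int_r^t e^{(t-s)A}\sqrt{Q_1}\,dW^1_s\big|^2=\int_0^{t-r}\|e^{uA}\sqrt{Q_1}\|_{HS}^2\,du$, and inserting the factor $u^{\theta}u^{-\theta}$ together with \eref{A41} gives $\int_0^{t-r}\|e^{uA}\sqrt{Q_1}\|_{HS}^2\,du\le(t-r)^{\theta}\int_0^{t-r}u^{-\theta}\|e^{uA}\sqrt{Q_1}\|_{HS}^2\,du\le\delta^{\theta}\int_0^{T}u^{-\theta}\|e^{uA}\sqrt{Q_1}\|_{HS}^2\,du\le C_T\delta^{\theta}$; integrating in $t$ keeps this of order $\delta^{\theta}$.

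The delicate term is $(e^{(t-r)A}-I)X^{\vare}_r$, which is where the smoothing estimate of Lemma \ref{SOX} enters and where the only real obstacle lies. On the first block $t\in[0,\delta)$ we have $r=0$, so $X^{\vare}_r=x$ need not belong to $H^{\theta}$ and \eref{SP3} is unavailable; there I would simply use $\EE|(e^{tA}-I)x|^2\le C|x|^2$ together with the moment bound \eref{F3.1}, so that $\int_0^{\delta}\EE|X^{\vare}_t-x|^2\,dt\le C_T\delta(1+|x|^2)\le C_T\delta^{\theta}(1+|x|^2)$. For $t\in[\delta,T]$ one has $r=t(\delta)\ge\delta>0$, so \eref{SP3} applies and, combined with \eref{4.5} at $p=2$,
\begin{equation*}
\EE\big|(e^{(t-r)A}-I)X^{\vare}_r\big|^2\le C\,\delta^{\theta}\,\EE\|X^{\vare}_r\|_{\theta}^2\le C_T\,\delta^{\theta}(1+|x|^2)\,(t(\delta))^{-\theta}.
\end{equation*}
The main point is then that, although $(t(\delta))^{-\theta}$ is singular at the origin, the elementary inequality $t(\delta)\ge t/2$ valid for all $t\ge\delta$ turns this into an integrable singularity, since $\int_\delta^T(t(\delta))^{-\theta}\,dt\le 2^{\theta}\int_\delta^T t^{-\theta}\,dt\le C_T$ because $\theta\in(0,1)$. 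Hence the contribution of this term is again $C_T\delta^{\theta}(1+|x|^2)$. Collecting the three estimates over $[0,\delta)$ and $[\delta,T]$ yields \eref{F3.7}; I expect the verification of $t(\delta)\ge t/2$ and the separate handling of the initial block to be the only nonroutine steps.
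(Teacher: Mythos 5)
Your proposal is correct and follows essentially the same route as the paper: the mild-formulation decomposition into the semigroup increment, the drift convolution, and the stochastic convolution, handled respectively via \eref{SP3} with Lemma \ref{SOX}, the boundedness of $B$, and condition \eref{A41}, with the initial block $[0,\delta)$ treated separately through \eref{F3.1}. The only (cosmetic) difference is that the paper inserts the intermediate point $t-\delta$ and bounds $|X^{\vare}_t-X^{\vare}_{t-\delta}|$ and $|X^{\vare}_{t(\delta)}-X^{\vare}_{t-\delta}|$ separately, whereas you work directly with $X^{\vare}_t-X^{\vare}_{t(\delta)}$ and control the singularity via $t(\delta)\ge t/2$; both yield the same integrable singularity $t^{-\theta}$.
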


\begin{proof}
It is easy to see that
\begin{eqnarray}
&&\mathbb{E}\left[\int^{T}_0|X_{t}^{\varepsilon}-X_{t(\delta)}^{\varepsilon}|^2 dt\right]\nonumber\\
=\!\!\!\!\!\!\!\!&& \mathbb{E}\left(\int^{\delta}_0|X_{t}^{\varepsilon}-x|^2 dt\right)+\mathbb{E}\left[\int^{T}_{\delta}|X_{t}^{\varepsilon}-X_{t(\delta)}^{\varepsilon}|^2 dt\right]\nonumber\\
\leq\!\!\!\!\!\!\!\!&& C_{T}\delta(1+|x|^2) +2\mathbb{E}\left(\int^{T}_{\delta}|X_{t}^{\varepsilon}-X_{t-\delta}^{\varepsilon}|^2 dt\right)+2\mathbb{E}\left(\int^{T}_{\delta}|X_{t(\delta)}^{\varepsilon}-X_{t-\delta}^{\varepsilon}|^2 dt\right),\label{F3.8}
\end{eqnarray}
%Then, we estimate the second term on the right-hand side of \eref{F3.8} firstly.
where we use \eref{F3.1} in the last inequality. After simple calculations, we have
\begin{eqnarray}
X_{t}^{\varepsilon}-X_{t-\delta}^{\varepsilon}=\!\!\!\!\!\!\!\!&&(e^{A\delta}-I)X_{t-\delta}^{\varepsilon}+\int_{t-\delta}^{t}e^{(t-s)A}B(X^{\varepsilon}_s, Y^{\varepsilon}_s)ds+\int_{t-\delta}^{t}e^{(t-s)A}\sqrt{Q_1}dW^{1}_s  \nonumber\\
:=\!\!\!\!\!\!\!\!&&I_{1}(t)+I_{2}(t)+I_{3}(t). \label{REGX}
\end{eqnarray}

For the first term $I_1(t)$, by property \eref{SP3} and Lemma \ref{SOX}, there exists a constant $C>0$ such that
\begin{eqnarray}  \label{REGX1}
\mathbb{E}\left(\int^{T}_{\delta}|I_{1}(t)|^2 dt\right)\leq\!\!\!\!\!\!\!\!&& C\mathbb{E}\int^{T}_{\delta}\delta^{\theta}
\|X_{t-\delta}^{\varepsilon}\|^2_{\theta} dt\nonumber\\
\leq\!\!\!\!\!\!\!\!&&C\delta^{\theta}\int^{T}_{\delta} \left[C(t-\delta)^{-\theta}|x|^{2}+C_T\right]dt
\leq C_{T}\delta^{\theta}(1+|x|^2).
\end{eqnarray}

For the term $I_{2}(t)$, by the boundedness of $B$, it is easy to see
\begin{eqnarray}\label{REGX2}
\mathbb{E}\left(\int^{T}_{\delta}|I_{2}(t)|^2 dt\right)\leq C_{T}\delta^2.
\end{eqnarray}

For the term $I_{3}(t)$, by condition \eref{A41}, we get
\begin{eqnarray}  \label{REGX3}
\mathbb{E}\left(\int^{T}_{\delta}|I_{3}(t)|^2dt\right)\leq\!\!\!\!\!\!\!\!&&C\int^{T}_{\delta}\int_{t-\delta} ^{t}\|e^{(t-s)A}\sqrt{Q_1}\|^2_{HS} dsdt\nonumber\\
\leq\!\!\!\!\!\!\!\!&&C\delta^{\theta}\int^{T}_{\delta}\int_{0} ^{\delta}s^{-\theta}\|e^{sA}\sqrt{Q_1}\|^2_{HS} dsdt
\leq C_T\delta^{\theta}.
\end{eqnarray}

Combining estimates \eref{REGX}-\eref{REGX3}, we get that
\begin{eqnarray}
\mathbb{E}\left(\int^{T}_{\delta}|X_{t}^{\varepsilon}-X_{t-\delta}^{\varepsilon}|^2dt\right)\leq\!\!\!\!\!\!\!\!&&C_{T}\delta^{\theta}(1+|x|^2). \label{F3.13}
\end{eqnarray}
By a similar argument as above, we have
\begin{eqnarray}
\mathbb{E}\left(\int^{T}_{\delta}|X_{t(\delta)}^{\varepsilon}-X_{t-\delta}^{\varepsilon}|^2dt\right)\leq\!\!\!\!\!\!\!\!&&C_{T}\delta^{\theta}(1+|x|^2). \label{F3.14}
\end{eqnarray}
Hence, \eref{F3.8}, \eref{F3.13} and \eref{F3.14} imply \eref{F3.7} holds. The proof is complete.
\end{proof}

\vskip 0.3cm

\subsection{The frozen  equation and exponential ergodicity}

Recall that the frozen equation is given by (\ref{FEQ1}). Since $F(x,\cdot)$ is Lipshcitz continuous for any fixed $x\in {H}$, it is easy to prove that the equation $\eref{FEQ1}$ has a unique mild solution, denoted by $Y_{t}^{x,y}$, for any fixed $x\in {H}$ and any initial data $y\in H$. By almost the same steps in proving \eref{ess}, it is easy to see that $\sup_{t\geq 0}\EE|Y_{t}^{x,y}|^2\leq C(1+|y|^2)$.

For any fixed $x\in {H}$, let $P^{x}_t$ be the transition semigroup of $Y_{t}^{x,y}$,
that is, for any bounded measurable function $\varphi$ on $H$,
\begin{eqnarray*}
P^x_t \varphi(y)= \mathbb{E} \left[\varphi\left(Y_{t}^{x,y}\right)\right], \quad y \in H,\ \ t>0.
\end{eqnarray*}
By the condition \ref{A6} and \cite[Theorem 4.3.9]{LR}, we can prove that there is a unique invariant measure for $P^x_t$, denoted by $\mu^x$. Before proving the asymptotical behavior of $P^x_t$, we first give the following Lemma.

\begin{lemma} \label{L3.4} There exists a constant $C>0$ such that, for all $x_1,x_2, y_1,y_2\in H$ and $t\geq 0$,
\begin{eqnarray*}
|Y^{x_1,y_1}_t-Y^{x_2,y_2}_t|^2\leq C|x_1-x_2|^{2\gamma}+e^{-(\lambda_1-L_F) t}|y_1-y_2|^{2}.
\end{eqnarray*}
\end{lemma}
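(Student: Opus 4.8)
The plan is to work directly with the difference process $Z_t := Y^{x_1,y_1}_t - Y^{x_2,y_2}_t$ and derive a differential inequality for $|Z_t|^2$ by an energy estimate. The crucial structural observation is that both solutions are driven by the \emph{same} noise $\sqrt{Q_2}\,dW^2_t$, so in the mild formulation the stochastic convolutions cancel and $Z_t$ solves the pathwise equation
\begin{equation*}
Z_t = e^{tA}(y_1-y_2) + \int_0^t e^{(t-s)A}\big[F(x_1,Y^{x_1,y_1}_s)-F(x_2,Y^{x_2,y_2}_s)\big]\,ds,
\end{equation*}
which contains no martingale part. Consequently, Itô's formula applied to $|Z_t|^2$ collapses to the deterministic chain rule, with no trace/quadratic-variation term to control; this is what makes the argument insensitive to whether $Q_2$ is trace class.

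Next I would establish, for each fixed $\omega$ and $t>0$,
\begin{equation*}
\frac{d}{dt}|Z_t|^2 = 2\langle AZ_t,Z_t\rangle + 2\big\langle F(x_1,Y^{x_1,y_1}_t)-F(x_2,Y^{x_2,y_2}_t),\,Z_t\big\rangle.
\end{equation*}
By \ref{A2} the operator $A$ is self-adjoint with smallest eigenvalue $\lambda_1$, so $\langle AZ_t,Z_t\rangle\leq -\lambda_1|Z_t|^2$, while the H\"older-in-$x$, Lipschitz-in-$y$ bound on $F$ in \ref{A1} gives $|F(x_1,Y^{x_1,y_1}_t)-F(x_2,Y^{x_2,y_2}_t)|\leq C|x_1-x_2|^{\gamma}+L_F|Z_t|$. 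The key point is to choose the Young's inequality parameter so as to recover the \emph{exact} rate $\lambda_1-L_F$: splitting the inner product as $2C|x_1-x_2|^{\gamma}|Z_t|+2L_F|Z_t|^2$ and applying $2ab\leq\eta a^2+\eta^{-1}b^2$ with the tuned value $\eta=\lambda_1-L_F$ yields
\begin{equation*}
\frac{d}{dt}|Z_t|^2 \leq -(\lambda_1-L_F)|Z_t|^2 + \frac{C^2}{\lambda_1-L_F}|x_1-x_2|^{2\gamma}.
\end{equation*}
Here \ref{A6} ($\lambda_1-L_F>0$) guarantees both that $\eta>0$ and that the resulting coefficient is a genuine decay rate. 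A final application of Gronwall's lemma (comparison with the associated linear ODE) integrates this to
\begin{equation*}
|Z_t|^2 \leq e^{-(\lambda_1-L_F)t}|y_1-y_2|^2 + \frac{C^2}{(\lambda_1-L_F)^2}|x_1-x_2|^{2\gamma},
\end{equation*}
since $\int_0^t e^{-(\lambda_1-L_F)(t-s)}\,ds\leq (\lambda_1-L_F)^{-1}$; this is exactly the claim with a new constant $C=C^2/(\lambda_1-L_F)^2$ and coefficient one on the exponential term.

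I expect the only genuine obstacle to be the rigorous justification of the energy identity for the mild solution $Z_t$: the term $\langle AZ_t,Z_t\rangle$ is only meaningful when $Z_t\in\mathscr{D}(A)$. One resolves this either by invoking the instantaneous smoothing of the analytic semigroup together with the boundedness of the drift increment (so that $Z_t\in\mathscr{D}(A)$ for $t>0$), or, more robustly, by carrying out the estimate on the Yosida-regularized equation with $A$ replaced by $A_n=nA(nI-A)^{-1}$, whose spectral gap satisfies $\langle A_n z,z\rangle\leq -\tfrac{n\lambda_1}{n+\lambda_1}|z|^2\to -\lambda_1|z|^2$, and then letting $n\to\infty$. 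Everything else --- the cancellation of the noise, the dissipativity bound, and the tuned Young's inequality --- is routine once this regularity point is settled.
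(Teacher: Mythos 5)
Your proposal is correct and follows essentially the same route as the paper: the noise cancels in the difference equation, the dissipativity of $A$ (via $\langle Az,z\rangle\leq-\lambda_1|z|^2$) combined with the H\"older-in-$x$/Lipschitz-in-$y$ bound on $F$ and a tuned Young's inequality gives the differential inequality $\frac{d}{dt}|Z_t|^2\leq-(\lambda_1-L_F)|Z_t|^2+C|x_1-x_2|^{2\gamma}$, and the comparison theorem concludes. Your additional remark on justifying the energy identity (smoothing or Yosida approximation) is a point the paper passes over silently but does not change the argument.
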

\begin{proof}
Note that
\begin{eqnarray*}
d(Y^{x_1,y_1}_t-Y^{x_2,y_2}_t)=\!\!\!\!\!\!\!\!&&A(Y^{x_1,y_1}_t-Y^{x_2,y_2}_t)dt+\left[F(x_1, Y^{x_1,y_1}_t)-F(x_2, Y^{x_2,y_2}_t)\right]dt.
\end{eqnarray*}
Then by condition \ref{A6} and Young's inequality, we have
\begin{eqnarray*}
&&\frac{d}{dt}|Y^{x_1,y_1}_t-Y^{x_2,y_2}_t|^2\\
=\!\!\!\!\!\!\!\!&&-2\|Y^{x_1,y_1}_t-Y^{x_2,y_2}_t\|_1^2+2\langle F(x_1, Y^{x_1,y_1}_t)-F(x_2, Y^{x_2,y_2}_t), Y^{x_1,y_1}_t-Y^{x_2,y_2}_t\rangle\\
\leq\!\!\!\!\!\!\!\!&&-2\lambda_1|Y^{x_1,y_1}_t-Y^{x_2,y_2}_t|^2+2L_F|Y^{x_1,y_1}_t-Y^{x_2,y_2}_t|^2+C|x_1-x_2|^{\gamma}|Y^{x_1,y_1}_t-Y^{x_2,y_2}_t|\\
\leq\!\!\!\!\!\!\!\!&&-(\lambda_1-L_F)|Y^{x_1,y_1}_t-Y^{x_2,y_2}_t|^2+C|x_1-x_2|^{2\gamma}.
\end{eqnarray*}
Hence, the comparison theorem yields that
\begin{eqnarray*}
|Y^{x_1,y_1}_t-Y^{x_2,y_2}_t|^2\leq\!\!\!\!\!\!\!\!&&e^{-(\lambda_1-L_F)t}|y_1-y_2|^2+C\int^t_0 e^{-(\lambda_1-L_F)(t-s)}ds |x_1-x_2|^{2\gamma}\\
\leq\!\!\!\!\!\!\!\!&&C|x_1-x_2|^{2\gamma}+e^{-(\lambda_1-L_F)t}|y_1-y_2|^{2}.
\end{eqnarray*}
The proof is complete.
\end{proof}

Now, we give a position to prove the following exponential behavior of $P^x_t$.

\begin{proposition}\label{Ergodicity}
There exists $C>0$  such that for any H\"{o}lder continuous function $\varphi:H\rightarrow H$ with index $\eta\in (0,1]$ and $x,y\in H$,
\begin{equation}
\Big|P^x_t\varphi(y)-\int_{H}\varphi(z)\mu^x(dz)\Big|\leq C(1+|y|^{\eta})\|\varphi\|_{C^{\eta}}e^{-\frac{(\lambda_1-L_F)\eta t}{2}}.
\end{equation}
%where $\|\varphi\|_{C^{\eta}}=\sup_{x\neq y}\frac{|\varphi(x)-\varphi(y)|}{|x-y|^{\eta}}$.
\end{proposition}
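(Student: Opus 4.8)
The plan is to exploit the invariance of $\mu^x$ together with the pathwise contraction estimate already recorded in Lemma \ref{L3.4}. Since $\mu^x$ is invariant for $P^x_t$, for the bounded measurable map $\varphi$ one has $\int_H\varphi(z)\,\mu^x(dz)=\int_H P^x_t\varphi(z)\,\mu^x(dz)=\int_H\mathbb{E}\big[\varphi(Y^{x,z}_t)\big]\,\mu^x(dz)$. Subtracting this from $P^x_t\varphi(y)=\mathbb{E}[\varphi(Y^{x,y}_t)]$ and bringing the constant under the integral against the probability measure $\mu^x$, I would write
\begin{align*}
P^x_t\varphi(y)-\int_H\varphi(z)\,\mu^x(dz)=\int_H\mathbb{E}\big[\varphi(Y^{x,y}_t)-\varphi(Y^{x,z}_t)\big]\,\mu^x(dz),
\end{align*}
where $Y^{x,y}_t$ and $Y^{x,z}_t$ are realized with the same driving noise $W^2$.

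Next I would bound the integrand by the H\"older seminorm of $\varphi$ and invoke Lemma \ref{L3.4} specialized to $x_1=x_2=x$. The key feature is that with a common slow variable $x$ the stochastic integrals in the two copies cancel, so $Y^{x,y}_t-Y^{x,z}_t$ solves a deterministic equation and Lemma \ref{L3.4} furnishes the \emph{pathwise} (almost sure) bound $|Y^{x,y}_t-Y^{x,z}_t|^2\leq e^{-(\lambda_1-L_F)t}|y-z|^2$. Raising to the power $\eta/2$ gives $|Y^{x,y}_t-Y^{x,z}_t|^\eta\leq e^{-(\lambda_1-L_F)\eta t/2}|y-z|^\eta$, hence
\begin{align*}
\big|\mathbb{E}[\varphi(Y^{x,y}_t)-\varphi(Y^{x,z}_t)]\big|\leq\|\varphi\|_{C^\eta}\,e^{-(\lambda_1-L_F)\eta t/2}|y-z|^\eta.
\end{align*}
Because the decay is already deterministic, no Jensen-type step is needed at this stage.

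Integrating against $\mu^x$ and using the subadditivity $|y-z|^\eta\leq|y|^\eta+|z|^\eta$ (valid since $\eta\in(0,1]$) reduces the claim to the uniform moment bound $\sup_x\int_H|z|^\eta\,\mu^x(dz)\leq C$, which I expect to be the only genuine obstacle. To settle it I would use the uniform second-moment estimate $\sup_{t\geq0}\mathbb{E}|Y^{x,y}_t|^2\leq C(1+|y|^2)$ already noted for the frozen equation, observing that the constant is uniform in $x$ since its proof only uses the boundedness of $F$ (uniform in $x$ by \ref{A1}) and the dissipativity gap $\lambda_1-L_F>0$ from \ref{A6}; combining this with the stationarity of $\mu^x$ (equivalently, passing to the limit $t\to\infty$ via Fatou's lemma) yields $\int_H|z|^2\,\mu^x(dz)\leq C$ uniformly in $x$, and then $\int_H|z|^\eta\,\mu^x(dz)\leq C$ by Jensen. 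Assembling the pieces gives
\begin{align*}
\Big|P^x_t\varphi(y)-\int_H\varphi(z)\,\mu^x(dz)\Big|\leq\|\varphi\|_{C^\eta}\,e^{-(\lambda_1-L_F)\eta t/2}\big(|y|^\eta+C\big),
\end{align*}
which is the asserted inequality after absorbing the constants into $C(1+|y|^\eta)$.
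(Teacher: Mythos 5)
Your proposal is correct and follows essentially the same route as the paper: invariance of $\mu^x$ to write the difference as an integral of $\mathbb{E}[\varphi(Y^{x,y}_t)-\varphi(Y^{x,z}_t)]$ against $\mu^x$, the H\"older bound combined with Lemma \ref{L3.4} at $x_1=x_2=x$, and the uniform moment bound $\int_H|z|^\eta\,\mu^x(dz)\leq C$ derived from $\sup_{t\geq0}\mathbb{E}|Y^{x,y}_t|^2\leq C(1+|y|^2)$. Your remark that the contraction in Lemma \ref{L3.4} is pathwise, so no Jensen step is needed, is a small but accurate refinement of the paper's presentation.
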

\begin{proof}
For any H\"{o}lder continuous function $\varphi:H\rightarrow H$ with index $\eta\in (0,1]$, by the definition of invariant measure and Lemma \ref{L3.4}, we have
\begin{eqnarray*}
\left|P^{x}_t\varphi(y)-\int_{H}\varphi(z)\mu^{x}(dz)\right|\leq\!\!\!\!\!\!\!\!&&\int_{H}\left|\mathbb{E}\varphi(Y^{x,y}_t)-\EE\varphi(Y^{x,z}_t)\right|\mu^{x}(dz)\\
\leq\!\!\!\!\!\!\!\!&&\|\varphi\|_{C^{\eta}}\int_{H}\mathbb{E}\left|Y^{x,y}_t-Y^{x,z}_t\right|^{\eta}\mu^{x}(dz)\\
\leq\!\!\!\!\!\!\!\!&&\|\varphi\|_{C^{\eta}}\int_{H}e^{-\frac{(\lambda_1-L_F)\eta t}{2} }|y-z|^{\eta}\mu^{x}(dz)\\
\leq\!\!\!\!\!\!\!\!&&C(1+|y|^{\eta})\|\varphi\|_{C^{\eta}}e^{-\frac{(\lambda_1-L_F)\eta t}{2} },
\end{eqnarray*}
where the last inequality comes from $\int_{H}|z|^{\eta}\mu^{x}(dz)\leq \sup_{t\geq 0}\EE|Y^{x,0}|^{\eta}\leq C$. The proof is completed.
\end{proof}

\subsection{Averaged equation and Zvonkin transformation}
In this subsection, we first recall the averaged equation, i.e.,
\begin{equation}
\displaystyle d\bar{X}_{t}=A\bar{X}_{t}dt+\bar{B}(\bar{X}_{t})dt+\sqrt{Q_1}dW^{1}_t,\quad
\bar{X}_{0}=x \label{3.1}
\end{equation}
with
\begin{align*}
\bar{B}(x)=\int_{H}B(x,y)\mu^{x}(dy),
\end{align*}
where $\mu^{x}$ is the unique invariant measure of the transition semigroup for  equation $\eref{FEQ1}$.

\vskip 0.1cm

\begin{lemma}\label{L3.8}
For any $x\in H$, the equation $\eref{3.1}$ has a unique strong solution $\bar{X}$.
Moreover, for any $T>0$ and $p\geq 1$, there exists a positive constant $C_{p,T}$ such that
\begin{align}
\mathbb{E}\left(\sup_{t\in[0,T]}|\bar{X}_{t}|^{p}\right)\leq C_{p,T}(1+|x|^{p}).\label{EbarX}
\end{align}
\end{lemma}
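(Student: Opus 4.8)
The plan is to reduce the statement to the well-posedness and a-priori estimates already established for the slow component, after first verifying that the averaged drift $\bar{B}$ inherits enough regularity. Indeed, the moment bound \eref{EbarX} is essentially the same computation as \eref{F3.1} in Lemma \ref{PMY}, so the only genuinely new ingredient is to show that $\bar{B}$ is bounded and H\"{o}lder continuous, which is exactly what makes \cite[Theorem 7]{DF} applicable to \eref{3.1}.

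First I would record that $\bar{B}$ is bounded: since $\bar{B}(x)=\int_H B(x,y)\mu^x(dy)$ and $B$ is bounded by \ref{A1}, one has $\|\bar{B}\|_{\infty}\leq\|B\|_{\infty}$. The main step is then to prove $\bar{B}\in C^{\alpha\wedge(\beta\gamma)}_b(H,H)$. I would split the difference as
\begin{align*}
\bar{B}(x_1)-\bar{B}(x_2)&=\int_H\big[B(x_1,y)-B(x_2,y)\big]\mu^{x_1}(dy)\\
&\quad+\int_H B(x_2,y)\mu^{x_1}(dy)-\int_H B(x_2,y)\mu^{x_2}(dy).
\end{align*}
The first integral is controlled by $C|x_1-x_2|^{\alpha}$ directly from the H\"{o}lder bound on $B$ in \ref{A1}. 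For the remaining two terms, I would represent the difference of invariant measures through the ergodic limit: by Proposition \ref{Ergodicity} applied to the $\beta$-H\"{o}lder function $B(x_2,\cdot)$, for any starting point $y$ one has $\int_H B(x_2,z)\mu^{x_i}(dz)=\lim_{t\to\infty}\mathbb{E}B(x_2,Y^{x_i,y}_t)$, so the remaining contribution equals $\lim_{t\to\infty}\big(\mathbb{E}B(x_2,Y^{x_1,y}_t)-\mathbb{E}B(x_2,Y^{x_2,y}_t)\big)$. Using the $\beta$-H\"{o}lder continuity of $B(x_2,\cdot)$ together with Lemma \ref{L3.4} applied with $y_1=y_2=y$, which gives $|Y^{x_1,y}_t-Y^{x_2,y}_t|\leq C|x_1-x_2|^{\gamma}$ uniformly in $t$, this term is bounded by $C|x_1-x_2|^{\beta\gamma}$. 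Altogether $\bar{B}$ is H\"{o}lder continuous of index $\alpha\wedge(\beta\gamma)$, which is precisely the regularity anticipated by the exponent $1-\alpha\wedge(\beta\gamma)$ appearing in \ref{A5}.

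With $\bar{B}$ bounded and H\"{o}lder continuous, existence and uniqueness of a mild solution to \eref{3.1} follow from \cite[Theorem 7]{DF} under \ref{A2}-\ref{A5}, exactly as in the proof of Lemma \ref{PMY}, now for a single equation driven only by $\sqrt{Q_1}\,W^1$. For the moment bound I would repeat the factorization argument verbatim: write the stochastic convolution $W_A(t)=\int_0^t e^{(t-s)A}\sqrt{Q_1}\,dW^1_s$ through the auxiliary process $Z_s$, invoke \ref{A4} (specifically \eref{A41}) and the Gaussian estimate leading to \eref{CV} to obtain $\mathbb{E}\big[\sup_{t\leq T}|W_A(t)|^p\big]\leq C_{p,T}$, and then combine with $\|\bar{B}\|_{\infty}<\infty$ and the contraction property $|e^{tA}x|\leq|x|$ from \eref{SP1} to conclude \eref{EbarX}.

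The main obstacle is the H\"{o}lder continuity of $\bar{B}$, and within it the control of the dependence on $x$ of the family of invariant measures $\{\mu^x\}_{x\in H}$; everything there hinges on the uniform-in-time $\gamma$-H\"{o}lder-in-$x$ bound for the frozen flow furnished by Lemma \ref{L3.4}, which itself rests on the dissipativity assumption \ref{A6}. Once $\bar{B}$ is known to be H\"{o}lder, the well-posedness and the moment bound are routine adaptations of Lemma \ref{PMY}.
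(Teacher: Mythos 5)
Your proposal is correct and follows essentially the same route as the paper: boundedness of $\bar B$ is immediate from \ref{A1}, the H\"older continuity of index $\alpha\wedge(\beta\gamma)$ is derived from Proposition \ref{Ergodicity} together with the uniform-in-$t$ bound of Lemma \ref{L3.4} (the paper approximates both invariant integrals by $\EE B(x_i,Y^{x_i,0}_t)$ and lets $t\to\infty$, which is only a cosmetic variant of your splitting into $\int[B(x_1,\cdot)-B(x_2,\cdot)]d\mu^{x_1}$ plus the difference of measures), and well-posedness plus the moment bound \eref{EbarX} then follow from \cite[Theorem 7]{DF} and the factorization argument of Lemma \ref{PMY}, exactly as you describe. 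No gaps.
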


\begin{proof}
Obviously, $\bar{B}$ is bounded due to the boundedness of $B$. Next, we can check that $\bar{B}$ satisfies the following:
\begin{eqnarray}
|\bar{B}(x_1)-\bar{B}(x_2)|\leq C|x_1-x_2|^{\alpha\wedge(\beta \gamma)},\quad  x_1,x_2\in H.\label{HObarB}
\end{eqnarray}
In fact, Proposition \ref{Ergodicity} and Lemma \ref{L3.4} imply that
\begin{eqnarray*}
|\bar{B}(x_1)-\bar{B}(x_2)|=\!\!\!\!\!\!\!\!&&\left|\int_{H} B(x_1,z)\mu^{x_1}(dz)-\int_{H} B(x_2,z)\mu^{x_2}(dz)\right|\\
\leq\!\!\!\!\!\!\!\!&&\left|\int_{H} B(x_1,z)\mu^{x_1}(dz)-{\EE} B(x_1, Y^{x_1,0}_t)\right|\\
&&+\left|  \EE B(x_2, Y^{x_2,0}_t)-\int_{H} B(x_2,z)\mu^{x_2}(dz)\right|\\
&&+ {\EE} \left|B(x_1, Y^{x_1,0}_t)-B(x_2, Y^{x_1,0}_t)\right|+ {\EE} \left|B(x_2, Y^{x_1,0}_t)-B(x_2, Y^{x_2,0}_t)\right|\\
\leq\!\!\!\!\!\!\!\!&&C e^{-\frac{(\lambda_1-L_F)\beta t}{2}}+C{\EE} \left|B(x_1, Y^{x_1,0}_t)-B(x_2, Y^{x_1,0}_t)\right|^{\frac{\alpha\wedge(\beta\gamma)}{\alpha}}\\
&&+ C{\EE} \left|B(x_2, Y^{x_1,0}_t)-B(x_2, Y^{x_2,0}_t)\right|^{\frac{\alpha\wedge(\beta\gamma)}{\beta\gamma}}\\
\leq\!\!\!\!\!\!\!\!&&C e^{-\frac{(\lambda_1-L_F)\beta t}{2}}+C|x_1-x_2|^{\alpha\wedge(\beta\gamma)},
\end{eqnarray*}
where the second inequality comes from the boundedness of $B$. Hence, letting $t\rightarrow \infty$, we obtain (\ref{HObarB}).
Then Eq.(\ref{3.1}) has a unique solution by \cite[Theorem 7]{DF} under the assumptions \ref{A2}-\ref{A5}. Moreover, (\ref{EbarX}) can be easily obtained by the same argument as in Lemma \ref{PMY}.
\end{proof}

\begin{remark}\label{R3.7}
It is worthy to point out that the expected H\"{o}lder index of $\bar{B}$ should be $\alpha\wedge\gamma$. But due to the technique used here, we only obtain that the index is $\alpha\wedge(\beta\gamma)$. By the way, if $F(x,y)$ is assumed H\"{o}lder continuous with respect to $y$ only, it will be difficult  to prove that $\bar{B}$ is still H\"{o}lder continuous. In fact, the key of proof for Lemma \ref{L3.8} is the estimate $\EE|Y^{x_1,y}_t-Y^{x_2,y}_t|^2\leq C|x_1-x_2|^{2\gamma}$. Where $C$ must be independent of $t$. However, if $F(x,y)$ is H\"{o}lder continuous with respect to $y$, then the method of proof used in Lemma \ref{L3.8} does not work any more. Although we can use the Zvonkin's transformation to change the H\"{o}lder continuous to Lipschitz continuous, and the estimate $\EE|Y^{x_1,y}_t-Y^{x_2,y}_t|^2\leq C_t|x_1-x_2|^{2\gamma}$, where $C_t$ depends on $t$. This is not good enough for our purpose since $C_t$ may tend to infinity as $t\uparrow \infty$.
\end{remark}

Now, we study the Zvonkin transformation of the averaged equation. Firstly, we introduce the following PDE:
\begin{equation}\label{PDE1}
\lambda U(x)-\bar{\mathscr{L}}U(x)=G(x),\quad x\in H,
\end{equation}
where $\lambda>0$, $\bar{\mathscr{L}}$ is given by (\ref{gege}) and $G: H\rightarrow H$ is measurable.

Let $C^{2}_b(H,H)$ be the space of the functions (from $H$ to $H$) which are bounded and twice differentiable, with first and second order bounded derivatives. If $U\in C^{2}_b(H,H)$, its norm is defined by
$$
\|U\|_{C^{2}_b}:=\|U\|_{\infty}+\|DU\|_{\infty}+\|D^2U\|_{\infty}.
$$

We have the following result.

\begin{lemma}\label{ppp}
For every $G\in C_b^{\alpha\wedge(\beta\gamma)}(H,H)$, there exists a function $U\in C^2_b(H,H)$ satisfying the following integral equation:
\begin{align}
U(x)=\int_0^\infty\!\e^{-\lambda t}T_t\Big(\langle \bar{B}, DU\rangle+G\Big)(x)dt.   \label{inte}
\end{align}
where $T_t f(x):=\EE[f(Z^{x}_t)]$ for any $f\in B_b(H,H)$, $Z^{x}_t$ is the unique solution of following equation
\begin{eqnarray}
dZ^{x}_t=AZ^{x}_tdt+\sqrt{Q_1}dW^1_t,\quad Z^{x}_0=x\in H.\label{OUEq}
\end{eqnarray}
Moreover, $U$ also solves equation (\ref{PDE1}) and the following estimates hold:
\begin{eqnarray}
&&\|U\|_{C^2_b}\leq D_{\lambda}\|G\|_{C^{\alpha\wedge(\beta\gamma)}_b};\label{BU}\\
&&\|(-A)^{\kappa_2}DU\|_{\infty}\leq C\|G\|_{C^{\alpha\wedge(\beta\gamma)}_b},\label{BU2}
%&&\|D^2 U\|_{\infty}\leq C\|G\|_{C^{\alpha\wedge(\beta\gamma)}_b},\label{BU1}
\end{eqnarray}
where $C,D_{\lambda}>0$ are two constants with $\lim_{\lambda\to \infty}D_{\lambda}=0$.
\end{lemma}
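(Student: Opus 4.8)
The plan is to construct $U$ as the fixed point of the integral operator on the right-hand side of \eref{inte} and then to verify that this fixed point has the claimed regularity and solves \eref{PDE1}. First I would establish the smoothing estimates for the Ornstein-Uhlenbeck semigroup $T_t$ associated to the linear equation \eref{OUEq}. Since $Z^x_t$ is Gaussian with mean $e^{tA}x$ and covariance $Q_1(t)=\int_0^t e^{sA}Q_1e^{sA^*}ds$, the Bismut-Elworthy-Li type formula gives, for $f\in C^\eta_b(H,H)$,
\begin{eqnarray*}
DT_tf(x)h=\frac1t\,\EE\Big[f(Z^x_t)\int_0^t\langle\Lambda_1(s)h,dW^1_s\rangle\Big],
\end{eqnarray*}
or more precisely the standard expressions yielding $\|DT_tf\|_\infty\leq C\|\Lambda_1(t)\|\,\|f\|_\infty$ and, by interpolation against the H\"older seminorm, $\|D^2T_tf\|_\infty\leq C\|\Lambda_1(t)\|^{2}\|f\|_\infty$ together with the sharper bound $\|D^2T_tf\|_\infty\leq C\|\Lambda_1(t)\|^{1+\eta}\,t^{-(1-\eta)/2}\|f\|_{C^\eta}$ exploiting the $\eta$-H\"older continuity. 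The key point is that \ref{A5}, in particular the integrability \eref{A50} of $\|\Lambda_1(t)\|^{1+\kappa_1}$ against $e^{-\lambda t}$ with $\kappa_1\geq\alpha\wedge\beta\wedge\gamma$ and $\kappa_1\geq 1-\alpha\wedge(\beta\gamma)$, is exactly what makes the time integrals $\int_0^\infty e^{-\lambda t}\|D^iT_t\,\cdot\|\,dt$ finite for $i=1,2$; likewise \eref{A51} controls $\int_0^\infty e^{-\lambda t}\|(-A)^{\kappa_2}DT_t\,\cdot\|\,dt$.

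Next I would set up the contraction. Define $\mathscr{T}U(x):=\int_0^\infty e^{-\lambda t}T_t(\langle\bar B,DU\rangle+G)(x)\,dt$ on the Banach space $C^2_b(H,H)$. Using the smoothing estimates above and the boundedness of $\bar B$ (from Lemma \ref{L3.8}), one checks that $\mathscr{T}$ maps $C^2_b$ into itself, with
\begin{eqnarray*}
\|\mathscr{T}U_1-\mathscr{T}U_2\|_{C^2_b}\leq D_\lambda\|\bar B\|_\infty\,\|U_1-U_2\|_{C^2_b},
\end{eqnarray*}
where $D_\lambda=\int_0^\infty e^{-\lambda t}\big(1+\|\Lambda_1(t)\|+\|\Lambda_1(t)\|^{1+\kappa_1}t^{-(1-\kappa_1)/2}\big)dt\to 0$ as $\lambda\to\infty$ by dominated convergence. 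Choosing $\lambda$ large enough that $D_\lambda\|\bar B\|_\infty<1$ makes $\mathscr{T}$ a contraction, giving the unique fixed point $U\in C^2_b(H,H)$. The same computation applied to $\mathscr{T}U$ itself, rather than to a difference, yields the a priori bound \eref{BU}; and the bound \eref{BU2} follows by applying $(-A)^{\kappa_2}$ under the integral and using \eref{A51} on the gradient term, noting that $\langle\bar B,DU\rangle+G\in C^{\alpha\wedge(\beta\gamma)}_b$ because $\bar B$ has H\"older index $\alpha\wedge(\beta\gamma)$, $DU$ is bounded and Lipschitz, and $G\in C^{\alpha\wedge(\beta\gamma)}_b$.

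Finally I would verify that the fixed point actually solves the PDE \eref{PDE1}. Since $\lambda T_t-\frac{d}{dt}T_t=\lambda T_t-T_t\bar{\mathscr{L}}_0$, where $\bar{\mathscr{L}}_0 f=\langle Ax,Df\rangle+\tfrac12\Tr[D^2fQ_1]$ is the generator of $T_t$, integrating by parts in $t$ in \eref{inte} gives
\begin{eqnarray*}
\lambda U-\bar{\mathscr{L}}_0U=\langle\bar B,DU\rangle+G,
\end{eqnarray*}
and since $\bar{\mathscr{L}}=\bar{\mathscr{L}}_0+\langle\bar B,D\cdot\rangle$ by \eref{gege}, this is precisely $\lambda U-\bar{\mathscr{L}}U=G$. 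The main obstacle I anticipate is the rigorous justification of the second-derivative estimate with the correct power $\|\Lambda_1(t)\|^{1+\kappa_1}$ and integrable time singularity: one must differentiate $T_t$ twice and trade one derivative for the H\"older seminorm of the data to avoid a non-integrable $\|\Lambda_1(t)\|^2$ factor near $t=0$, and it is exactly here that the compatibility condition $\kappa_1\geq 1-\alpha\wedge(\beta\gamma)$ in \ref{A5} is used to guarantee $\int_0^\infty e^{-\lambda t}\|\Lambda_1(t)\|^{1+\kappa_1}t^{-(1-\kappa_1)/2}dt<\infty$. A secondary technical point is that $Q_1$ need not be trace class, so the It\^o formula and the identity $\frac{d}{dt}T_tf=T_t\bar{\mathscr{L}}_0 f$ must be justified through the trace-class approximation of $Q_1(t)$ alluded to in the second Remark, rather than applied directly.
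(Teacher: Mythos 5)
Your overall strategy --- construct $U$ as the fixed point of the map $U\mapsto\int_0^\infty e^{-\lambda t}T_t(\langle\bar B,DU\rangle+G)\,dt$ using the smoothing of the Ornstein--Uhlenbeck semigroup, make the contraction constant small by taking $\lambda$ large, and recover \eref{PDE1} by integrating by parts in $t$ --- is exactly the paper's (the paper phrases the fixed point as a Picard iteration $U_n$, but that is the same argument). The step that would actually fail is the second-derivative smoothing estimate, which you yourself identify as the crux. You propose $\|D^2T_tf\|_\infty\leq C\|\Lambda_1(t)\|^{1+\eta}t^{-(1-\eta)/2}\|f\|_{C^\eta}$ and consequently need $\int_0^\infty e^{-\lambda t}\|\Lambda_1(t)\|^{1+\kappa_1}t^{-(1-\kappa_1)/2}\,dt<\infty$. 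Condition \ref{A5} gives only \eref{A50}, i.e.\ integrability of $e^{-\lambda t}\|\Lambda_1(t)\|^{1+\kappa_1}$ \emph{without} the extra singular factor, and your stronger requirement genuinely fails in the paper's own example of Section \ref{Sec Exs}: there $\|\Lambda_1(t)\|\leq Ct^{-(1+r_1)/2}$ with $r_1\in(0,1/7)$ and $\kappa_1=3/4$, so the integrand behaves near $t=0$ like $t^{-\rho}$ with $\rho=\tfrac{7}{8}(1+r_1)+\tfrac18>1$, which is not integrable. The correct estimate, quoted in the paper from \cite[Theorem 4]{DF}, is $\|D^2T_tf\|_\infty\leq C\|\Lambda_1(t)\|^{2-\eta}\|f\|_{C^\eta_b}$ with no extra power of $t$; the exponent $2-\alpha\wedge(\beta\gamma)\leq 1+\kappa_1$ is precisely what the constraint $\kappa_1\geq 1-\alpha\wedge(\beta\gamma)$ in \ref{A5} is designed to match, so that $D_\lambda=\int_0^\infty e^{-\lambda t}\|\Lambda_1(t)\|^{2-\alpha\wedge(\beta\gamma)}\,dt<\infty$ and $D_\lambda\to0$ as $\lambda\to\infty$.

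A secondary correction: your contraction constant $D_\lambda\|\bar B\|_\infty$ is not sufficient. The $C^2_b$-norm of the difference of two iterates is controlled through $\|\langle\bar B,DU_1-DU_2\rangle\|_{C^{\alpha\wedge(\beta\gamma)}_b}$, so the H\"older seminorm of $\bar B$ (from Lemma \ref{L3.8}) enters as well, and the contraction factor is $CD_\lambda\|\bar B\|_{C^{\alpha\wedge(\beta\gamma)}_b}$ as in the paper; the same norm appears in the closed estimate leading to \eref{BU}. With these two repairs your argument coincides with the paper's proof, including the treatment of \eref{BU2} via \eref{A51} and the trace-class approximation needed to justify the It\^o/integration-by-parts step.
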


\begin{proof}
We construct a solution to (\ref{inte}) via the Picard's iteration argument.
Set $U_0\equiv0$ and for $n\in \mathbb{N}$, define $U_n$ recursively by
\begin{eqnarray}
U_n(x):=\int_0^\infty\!\e^{-\lambda t}T_t\Big(\langle \bar{B},DU_{n-1}\rangle+G\Big)(x)dt.
\end{eqnarray}
Refer to \cite[Theorem 4]{DF}, we have for any $f\in C^{\theta}_b(H,H)$ with $\theta\in (0,1]$,
\begin{eqnarray}
\|D T_t f\|_{\infty}\leq C\|\Lambda_t\|\|f\|_{\infty},\quad \|D^2 T_t f\|_{\infty}\leq C\|\Lambda_t\|^{2-\theta}\|f\|_{C^{\theta}_b}.\label{OU}
\end{eqnarray}
Then it is easy to check that $U_1\in C^1_b(H, H)$, and $U_2$ is thus well defined, and so on. We claim that $U_1\in C_b^{2}(H, H)$. In fact, thanks to condition \ref{A50} and using (\ref{OU}), we have
\begin{align*}
\|D^2U_1\|_{\infty}&\leq \int_0^\infty\!\e^{-\lambda t}\|D^2 T_tG\|_{\infty}dt\\
&\leq C\|G\|_{C^{\alpha\wedge(\beta\gamma)}_b}\int_0^\infty\!\e^{-\lambda t}\|\Lambda_t\|^{2-\alpha\wedge(\beta\gamma)}dt:=C D_{\lambda}\|G\|_{C^{\alpha\wedge(\beta\gamma)}_b},
\end{align*}
where $D_\lambda:=\int_0^\infty\!\e^{-\lambda t}\|\Lambda_t\|^{2-\alpha\wedge(\beta\gamma)}dt$, and by dominate convergence theorem it holds that $\lim_{\lambda\rightarrow +\infty}D_\lambda=0$.
As a result, note that $\bar{B}\in C_b^{\alpha\wedge(\beta\gamma)}(H,H)$, it holds
$$
\langle \bar{B}, DU_1\rangle \in C_b^{\alpha\wedge(\beta\gamma)}(H,H).
$$
Repeating the above argument, we can get for every $n\in \mathbb{N}$,
$$
U_n\in C_b^{2}(H, H).
$$
Moreover, for any $n>m$
\begin{align*}
U_n(x)-U_m(x)&=\int_0^\infty\!\e^{-\lambda t}T_t\Big(\langle \bar{B}, DU_{n-1}-DU_{m-1}\rangle\Big)(x)dt.
\end{align*}
By (\ref{OU}), we further have that
\begin{align*}
\|U_n-U_m\|_{C^2_b}&\leq \int_0^\infty\!\e^{-\lambda t}\Big\|T_t\Big(\langle \bar{B}, DU_{n-1}-DU_{m-1}\rangle\Big)\Big\|_{C^2_b}d t\\
&\leq C\!\!\int_0^\infty\!\e^{-\lambda t}\|\Lambda_t\|^{2-\alpha\wedge(\beta\gamma)} dt\cdot\|\langle \bar{B}, DU_{n-1}-DU_{m-1}\rangle\|_{C^{\alpha\wedge(\beta\gamma)}_b}\\
&\leq CD_{\lambda}\|\bar{B}\|_{C^{\alpha\wedge(\beta\gamma)}_b}\cdot\|DU_{n-1}-DU_{m-1}\|_{C^{\alpha\wedge(\beta\gamma)}_b}\\
&\leq CD_{\lambda}\|\bar{B}\|_{C^{\alpha\wedge(\beta\gamma)}_b}\cdot\|U_{n-1}-U_{m-1}\|_{C^2_b}.
\end{align*}
This means that for $\lambda$ large enough, $U_n$ is Cauchy sequence in $C_b^{2}(H, H)$. Thus, there exists a limit function $U\in C_b^{2}(H, H)$ satisfying (\ref{inte}). The assertion that $U$ solves (\ref{PDE1}) follows by integration by parts.

Now we show the estimates (\ref{BU}) and (\ref{BU2}). By (\ref{OU}) and \ref{A50} again, we have
\begin{align*}
\|U\|_{C^2_b}&\leq \int_0^\infty\!\e^{-\lambda t}\Big\|T_t\Big(\langle \bar{B}, DU\rangle+G\Big)\Big\|_{C^2_b}dt\\
&\leq C\int_0^\infty\!\e^{-\lambda t}\|\Lambda_t\|^{2-\alpha\wedge(\beta\gamma)}dt\cdot\|\langle \bar{B}, DU\rangle+G\|_{C^{\alpha\wedge(\beta\gamma)}_b}\\
&\leq C D_{\lambda}\Big(\|G\|_{C^{\alpha\wedge(\beta\gamma)}}+\|\bar{B}\|_{C^{\alpha\wedge(\beta\gamma)}}\cdot\|U\|_{C^2_b}\Big).
\end{align*}
Taking $\lambda$ large enough such that $C D_\lambda\|\bar{B}\|_{C^{\alpha\wedge(\beta\gamma)}}\leq \frac{1}{2}$, we get the desired estimate (\ref{BU}).

Note that for any $\kappa_2\in(0,1/2)$, %%%$\kappa_2\in(0,1)$ 1/2 at most, here should be careful,
we can prove that
$$
\|(-A)^{\kappa_2}D T_t f\|\leq C\|(-A)^{\kappa_2}\Lambda_t\|\|f\|_{\infty}.
$$
Then by condition \ref{A51}, it is easy to see that
\begin{align*}
\|(-A)^{\kappa_2}DU\|_{\infty}&= \int_0^\infty\!\e^{-\lambda t}\|(-A)^{\kappa_2}DT_t\Big(\langle \bar{B}, DU\rangle+G\Big)\|_{\infty}dt\\
&\leq\int_0^\infty\!\e^{-\lambda t}\|(-A)^{\kappa_2}\Lambda_t\|dt\cdot\|\langle \bar{B}, DU\rangle+G\|_{\infty}\leq C\|G\|_{C^{\alpha\wedge(\beta\gamma)}_b}.
\end{align*}
The whole proof is finished.
\end{proof}

Now, we prove the following Zvonkin's transformation.

\begin{lemma}\label{ZT}
Let $\bar X_t$ be the solution of equation \eref{3.1}. Let $U$ be the solution of Eq. \eref{MPDE}. Then the formulas \eref{FbarX} and \eref{FX} hold.
\end{lemma}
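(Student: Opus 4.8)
The plan is to make rigorous the formal computation already sketched in the ``Idea of proof'' subsection, now that Lemma \ref{ppp} guarantees a genuine solution $U\in C^2_b(H,H)$ to \eref{MPDE} (taking $G=\bar B$, which lies in $C_b^{\alpha\wedge(\beta\gamma)}(H,H)$ by Lemma \ref{L3.8}). The core mechanism is It\^o's formula applied to $U(\bar X_t)$ and to $U(X^{\vare}_t)$, but since $Q_1$ need not be trace class, It\^o's formula cannot be applied directly to the mild solution; this is flagged in the second Remark of Section \ref{Sec Main Result}. First I would therefore approximate: introduce the Yosida-type approximations $A_n$ of $A$ (or spectral truncations $\Pi_n$ projecting onto $\mathrm{span}\{e_1,\dots,e_n\}$) together with smooth, bounded, finite-dimensional approximations of $\bar B$, yielding regularized equations whose solutions $\bar X_t^n$ are strong in the probabilistic sense and to which the infinite-dimensional It\^o formula applies. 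The approximating covariance operators become trace class after truncation, so the stochastic integral terms are classical.

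Next, applying It\^o's formula to $U(\bar X^n_t)$ and using that $U$ solves the approximated version of $\lambda U-\bar{\mathscr L}U=\bar B$, the drift simplifies exactly as in the formal calculation: the generator term cancels against $\lambda U-\bar B$, leaving
\[
dU(\bar X^n_t)=\lambda U(\bar X^n_t)\,dt-\bar B(\bar X^n_t)\,dt+DU(\bar X^n_t)\sqrt{Q_1}\,dW^1_t
\]
up to approximation errors. Solving \eref{3.1} for $\bar B(\bar X^n_t)\,dt$, substituting, and then applying the variation-of-constants (mild) formulation together with the stochastic integration-by-parts (the factorization-type identity $\int_0^t e^{(t-s)A}\,dU(\bar X_s)=U(\bar X_t)-e^{tA}U(x)-\int_0^t Ae^{(t-s)A}U(\bar X_s)\,ds$ applied to the semimartingale $U(\bar X_s)$) produces precisely \eref{FbarX} at the approximation level. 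The representation \eref{FX} for $X^{\vare}_t$ is obtained identically, except that $X^{\vare}_t$ solves the slow equation with the genuine drift $B(X^{\vare}_s,Y^{\vare}_s)$ rather than $\bar B$, so the extra source term $\int_0^t e^{(t-s)A}\langle I+DU(X^{\vare}_s),\,B(X^{\vare}_s,Y^{\vare}_s)-\bar B(X^{\vare}_s)\rangle\,ds$ survives; the cancellation of $\bar B$ against the generator of $U$ is the same in both cases.

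The final step is to pass to the limit $n\to\infty$. Here the estimates from Lemma \ref{ppp} are essential: the bounds $\|U\|_{C^2_b}\le D_\lambda\|\bar B\|_{C^{\alpha\wedge(\beta\gamma)}_b}$ and $\|(-A)^{\kappa_2}DU\|_\infty\le C\|\bar B\|_{C^{\alpha\wedge(\beta\gamma)}_b}$ control all the terms uniformly; the gradient estimate with the $(-A)^{\kappa_2}$ factor (from \eref{A51}) is what gives integrability of the singular term $\int_0^t Ae^{(t-s)A}U(\bar X_s)\,ds$, since one writes $Ae^{(t-s)A}U=(-A)^{1-\kappa_2}e^{(t-s)A}\cdot(-A)^{\kappa_2}U$ and uses \eref{SP2}. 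Combining these with the moment bounds of Lemmas \ref{PMY} and \ref{L3.8} and the continuity of $\bar X^n_t\to\bar X_t$, $X^{\vare,n}_t\to X^{\vare}_t$ in $L^p(\Omega;C([0,T];H))$, every term converges and the identities \eref{FbarX}, \eref{FX} hold for the true mild solutions. The main obstacle is precisely this justification of It\^o's formula in the non--trace-class setting: one must verify that the approximation scheme keeps the generator identity for $U$ under control and that the singular convolution term $\int_0^t Ae^{(t-s)A}U(\cdot)\,ds$ remains well defined and convergent, which is exactly where the regularity \eref{A51} of $U$ and the smoothing \eref{SP2} of the semigroup must be balanced.
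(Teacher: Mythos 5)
Your proposal matches the paper's proof: the authors likewise invoke Lemma \ref{ppp} for the regularity of $U$, introduce Yosida approximations $A_m$ of $A$ together with the spectral projections $\Pi_n$ on the noise so that It\^o's formula applies, run the formal computation of Subsection 2.1 at the approximation level, and pass to the limit (the paper omits the limit details as classical, whereas you spell out which estimates control the singular convolution term). No substantive difference in approach.
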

\begin{proof} Inspired from \cite{DF}, the idea of proof is the one given in subsection 2.1. The only point is the application of It\^o's formula. On one hand, duo to $\bar{B}\in C^{\alpha\wedge (\beta\gamma)}_b(H,H)$, Eq. \eref{MPDE} has a solution $U\in C^2_{b}(H,H)$ which satisfies \eref{BU} and \eref{BU2} by Lemma \ref{ppp}. On the other hand,  we introduce the approximations:
\begin{eqnarray*}
d\bar X^{m,n}_{t}=[A_m\bar X^{m,n}_{t}+\bar B(\bar X^{m,n}_{t})]dt+\sqrt{Q_1}\Pi_n dW_{t}^{1},\quad\bar X^{m,n}_{0}=x,
\label{FFEQ1}
\end{eqnarray*}
where $A_m$ are the Yosida approximations of $A$ and $\Pi_n$ is the orthogonal projection of $H$ onto $\text{span}\{e_1,..., e_n\}$. Then the argument in Subsection 2.1 can be done on these approximations and then one can pass to the limit in both sides. We omit the details which are classical.
\end{proof}

\subsection{Construction of auxiliary processes}

Following the idea in \cite{K1}, we introduce an auxiliary process $(\hat{X}_{t}^{\varepsilon},\hat{Y}_{t}^{\varepsilon})\in{H}\times H$ and divide $[0,T]$ into intervals of size $\delta$, where $\delta$ is a fixed positive number depending on $\vare$ and will be chosen later.

We construct a process $\hat{Y}_{t}^{\varepsilon}$, with $\hat{Y}_{0}^{\varepsilon}=Y^{\varepsilon}_{0}=y$, and for any $k\in \mathbb{N}$ and $t\in[k\delta,\min((k+1)\delta,T)]$,
\begin{eqnarray}
\hat{Y}_{t}^{\varepsilon}=\hat Y_{k\delta}^{\varepsilon}+\frac{1}{\varepsilon}\int_{k\delta}^{t}A\hat{Y}_{s}^{\varepsilon}ds+\frac{1}{\varepsilon}\int_{k\delta}^{t}
F(X_{k\delta}^{\varepsilon},\hat{Y}_{s}^{\varepsilon})ds+\frac{1}{\sqrt{\varepsilon}}\int_{k\delta}^{t}\sqrt{Q_2}dW^{2}_s,\label{4.6a}
\end{eqnarray}
which is equivalent to
$$
d\hat{Y}_{t}^{\vare}=\frac{1}{\vare}\left[A\hat{Y}_{t}^{\vare}+F\left(X^{\vare}_{t(\delta)},\hat{Y}_{t}^{\vare}\right)\right]dt+\frac{1}{\sqrt{\vare}}\sqrt{Q_2}dW^{2}_t,\quad \hat{Y}_{0}^{\vare}=y.
$$
We also construct another auxiliary process $\hat{X}_{t}^{\vare}\in H$ by
\begin{eqnarray}
\hat X^{\vare}_t=\!\!\!\!\!\!\!\!&&e^{tA}(x+U(x))+\int^t_0 e^{(t-s)A}\lambda U(X^{\vare}_s)ds-U(X^{\vare}_t)-\int^t_0 Ae^{(t-s)A}U(X^{\vare}_s)ds\nonumber\\
&&\!\!+\int^t_0 e^{(t-s)A}\sqrt{Q_1}dW^1_s+\int^t_0 e^{(t-s)A}DU(X^{\vare}_s)\sqrt{Q_1}dW^1_s\nonumber\\
&&\!\!+\int^t_0 e^{(t-s(\delta))A}\langle D U(X^{\vare}_{s(\delta)})+I, B(X^{\vare}_{s(\delta)}, \hat Y^{\vare}_s)-\bar{B}(X^{\vare}_{s(\delta)})\rangle ds.\label{AMX}
\end{eqnarray}

\begin{lemma} \label{DEY}
%For any $x, y\in H$, $T>0$ and $\vare\in(0,1)$, there exists a constant $C_{R,T}>0$ such that
For any $T>0$, there exists a constant $C_{T}>0$ such that for any $x, y\in H$ and $\vare\in(0,1)$,
\begin{eqnarray}
\mathbb{E}\left(\int_0^{T}|Y_{t}^{\varepsilon}-\hat{Y}_{t}^{\varepsilon}|^2dt\right)\leq C_{T}(|x|^{2\gamma}+1)\delta^{\theta\gamma}. \label{3.14}
\end{eqnarray}
\end{lemma}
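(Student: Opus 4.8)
The plan is to work with the difference process $Z_t^\varepsilon := Y_t^\varepsilon - \hat Y_t^\varepsilon$ and to exploit that the fast equation in \eref{Main equation} and the auxiliary equation \eref{4.6a} are driven by the \emph{same} Wiener process $W^2$ with the \emph{same} constant diffusion coefficient $\sqrt{Q_2}$. Subtracting the two equations, the stochastic integrals cancel exactly, so $Z^\varepsilon$ solves the random, noise-free evolution equation
\[
dZ_t^\varepsilon=\frac1\varepsilon\Big[AZ_t^\varepsilon+F(X_t^\varepsilon,Y_t^\varepsilon)-F(X_{t(\delta)}^\varepsilon,\hat Y_t^\varepsilon)\Big]dt,\qquad Z_0^\varepsilon=0.
\]
Since no stochastic integral is present, $t\mapsto|Z_t^\varepsilon|^2$ is absolutely continuous along each path (rigorously one passes through the Yosida approximation of $A$, exactly as in the proof of Lemma \ref{ZT}), so I may differentiate $|Z_t^\varepsilon|^2$ in the spirit of Lemma \ref{L3.4}.

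The energy estimate is the heart of the argument. Using the self-adjointness and strict dissipativity of $A$, so that $\langle AZ_t^\varepsilon,Z_t^\varepsilon\rangle\le-\lambda_1|Z_t^\varepsilon|^2$, and splitting
\[
F(X_t^\varepsilon,Y_t^\varepsilon)-F(X_{t(\delta)}^\varepsilon,\hat Y_t^\varepsilon)
=\big[F(X_t^\varepsilon,Y_t^\varepsilon)-F(X_{t(\delta)}^\varepsilon,Y_t^\varepsilon)\big]
+\big[F(X_{t(\delta)}^\varepsilon,Y_t^\varepsilon)-F(X_{t(\delta)}^\varepsilon,\hat Y_t^\varepsilon)\big],
\]
condition \ref{A1} controls the first bracket by $C|X_t^\varepsilon-X_{t(\delta)}^\varepsilon|^\gamma$ and the second by $L_F|Z_t^\varepsilon|$ (note $Y_t^\varepsilon-\hat Y_t^\varepsilon=Z_t^\varepsilon$). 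A Young inequality applied to the resulting cross term, together with the strict dissipativity \ref{A6} guaranteeing $c:=\lambda_1-L_F>0$, lets half of the negative quadratic term absorb the cross term and yields
\[
\frac{d}{dt}|Z_t^\varepsilon|^2\le-\frac{c}{\varepsilon}|Z_t^\varepsilon|^2+\frac{C}{\varepsilon}|X_t^\varepsilon-X_{t(\delta)}^\varepsilon|^{2\gamma}.
\]
The comparison (Gronwall) argument with $Z_0^\varepsilon=0$ then gives $|Z_t^\varepsilon|^2\le\frac{C}{\varepsilon}\int_0^t e^{-c(t-s)/\varepsilon}|X_s^\varepsilon-X_{s(\delta)}^\varepsilon|^{2\gamma}ds$.

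Finally I would take expectations, integrate in $t$ over $[0,T]$, and use Fubini to interchange the $ds$ and $dt$ integrations; the inner integral $\int_s^T e^{-c(t-s)/\varepsilon}\,dt\le\varepsilon/c$ produces a factor $\varepsilon$ that cancels the singular prefactor $1/\varepsilon$, leaving
\[
\EE\int_0^T|Z_t^\varepsilon|^2dt\le\frac{C}{c}\,\EE\int_0^T|X_s^\varepsilon-X_{s(\delta)}^\varepsilon|^{2\gamma}\,ds.
\]
Since $2\gamma\le2$, Jensen's inequality for the concave map $r\mapsto r^{\gamma}$ applied to the finite measure $ds\otimes d\PP$ of total mass $T$ bounds the right-hand side by $C\,T^{1-\gamma}\big(\EE\int_0^T|X_s^\varepsilon-X_{s(\delta)}^\varepsilon|^2ds\big)^{\gamma}$, and Lemma \ref{COX} together with $(1+|x|^2)^\gamma\le1+|x|^{2\gamma}$ then closes the estimate. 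The delicate point, and the reason the exponent degrades from the $\delta^\theta$ of Lemma \ref{COX} to $\delta^{\theta\gamma}$, is precisely this last passage from the $L^2$ time-increment estimate to its $2\gamma$-th power; everything upstream hinges on the exact cancellation of the noise, which alone makes the estimate pathwise and allows the dissipativity rate $c/\varepsilon$ to absorb the singular $1/\varepsilon$ factor.
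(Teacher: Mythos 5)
Your proposal is correct and follows essentially the same route as the paper: the noise cancellation, the dissipative energy estimate with the splitting of $F$ and Young's inequality, the comparison/Gronwall step, Fubini, and the final Jensen inequality producing the exponent $\delta^{\theta\gamma}$ from Lemma \ref{COX} all match the paper's argument. The only cosmetic difference is that the paper differentiates $\EE|\rho^{\vare}_t|^2$ directly rather than arguing pathwise first and taking expectations afterwards.
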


\begin{proof}
Let $\rho^{\vare}_t:=Y_{t}^{\varepsilon}-\hat{Y}_{t}^{\varepsilon}$
Then, it is easy to see that $\rho^{\vare}_t$ satisfies the following equation:
\begin{equation*} d\rho^{\vare}_t=\frac{1}{\varepsilon}\left[A\rho^{\vare}_t+F(X_t^\varepsilon,Y_t^\varepsilon)-F(X_{t(\delta)}^\varepsilon,\hat{Y}_t^\varepsilon)\right]dt,\quad \rho^{\vare}_0=0.
\end{equation*}
Then by condition \ref{A6}, we get
\begin{eqnarray*}
\frac{d}{dt}\EE|\rho^{\vare}_t|^2
=\!\!\!\!\!\!\!\!&&-\frac{2}{\varepsilon}\EE\|\rho^{\vare}_t\|_1^2+\frac{2}{\varepsilon}\EE\langle F(X_t^\varepsilon,Y_t^\varepsilon)-F(X_{t(\delta)}^\varepsilon,\hat{Y}_t^\varepsilon),\rho^{\vare}_t\rangle \nonumber\\
\leq\!\!\!\!\!\!\!\!&&-\frac{2\lambda_1}{\varepsilon}\EE|\rho^{\vare}_t|^2+\frac{C}{\varepsilon}\EE\left(|X_t^\varepsilon-X_{t(\delta)}^\varepsilon|^{\gamma}\cdot|\rho^{\vare}_t|\right)+\frac{2L_F}{\varepsilon}\EE|\rho^{\vare}_t|^2  \nonumber\\
\leq\!\!\!\!\!\!\!\!&&-\frac{\lambda_1-L_F}{\varepsilon}\EE|\rho^{\vare}_t|^2+\frac{C}{\varepsilon}\EE|X_t^\varepsilon-X_{t(\delta)}^\varepsilon|^{2\gamma}.\nonumber
\end{eqnarray*}
Therefore, by the comparison theorem we have
\begin{eqnarray*}
\EE|\rho^{\vare}_t|^2\leq\frac{C}{\varepsilon}\int_0^te^{-\frac{(\lambda_1-L_F)(t-s)}{\vare}}\EE|X_s^\varepsilon-X_{s(\delta)}^\varepsilon|^{2\gamma}ds.
\end{eqnarray*}
Then by Fubini's theorem and Lemma \ref{COX}, we get that for any $T>0$,
\begin{eqnarray*}
\EE\left(\int_0^T|\rho^{\vare}_t|^2dt\right)\leq\!\!\!\!\!\!\!\!&& \frac{C}{\varepsilon}\int_0^T\int^t_0e^{-\frac{(\lambda_1-L_F)(t-s)}{\vare}}\EE|X_s^\varepsilon-X_{s(\delta)}^\varepsilon|^{2\gamma}dsdt\nonumber\\
=\!\!\!\!\!\!\!\!&&  \frac{C}{\varepsilon}\EE\left[\int_0^T|X_s^\varepsilon-X_{s(\delta)}^\varepsilon|^{2\gamma}\left(\int^T_s e^{-\frac{(\lambda_1-L_F)(t-s)}{\vare}}dt\right)ds\right]\nonumber\\
\leq\!\!\!\!\!\!\!\!&& C_T\left(\EE\int_0^T|X_s^\varepsilon-X_{s(\delta)}^\varepsilon|^2 ds\right)^{\gamma}\leq C_{T}(|x|^{2\gamma}+1)\delta^{\theta\gamma}.
\end{eqnarray*}
The proof is complete.
\end{proof}

\subsection{Proof of theorem \ref{main result 1}}
This section is devoted to giving the proof of our main result. We first give the estimate for the difference process $X_{t}^{\vare}-\hat{X}_{t}^{\vare}$.

\begin{lemma} \label{DEX} For any $T>0$ and $p\geq 2$, there exists $C_{p,T}>0$ such that
\begin{eqnarray*}
\mathbb{E}\Big(\sup_{t\in [0, T]}|X_{t}^{\vare}-\hat{X}_{t}^{\vare}|^p\Big)\leq C_{p,T}(1+|x|^2)\delta^{\theta[\alpha\wedge (\beta\gamma)]}.
\end{eqnarray*}
\end{lemma}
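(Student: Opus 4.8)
The plan is to exploit the fact that \eref{FX} and \eref{AMX} differ only in their last terms. Writing $\Phi_s:=\langle I+DU(X^\vare_s),\,B(X^\vare_s,Y^\vare_s)-\bar B(X^\vare_s)\rangle$ and $\hat\Phi_s:=\langle I+DU(X^\vare_{s(\delta)}),\,B(X^\vare_{s(\delta)},\hat Y^\vare_s)-\bar B(X^\vare_{s(\delta)})\rangle$, I would first observe that the four drift-regular terms in \eref{FX} and \eref{AMX} coincide, and that, since $(I+DU)\sqrt{Q_1}=\sqrt{Q_1}+DU\sqrt{Q_1}$, the single stochastic convolution in \eref{FX} splits exactly into the two stochastic convolutions of \eref{AMX} and cancels. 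Hence
\[
X^\vare_t-\hat X^\vare_t=\underbrace{\int_0^t\!\big(e^{(t-s)A}-e^{(t-s(\delta))A}\big)\Phi_s\,ds}_{=:K_1(t)}+\underbrace{\int_0^t\! e^{(t-s(\delta))A}\big(\Phi_s-\hat\Phi_s\big)\,ds}_{=:K_2(t)}.
\]
A key preliminary point is that $\Phi_s$ is deterministically bounded, $|\Phi_s|\le(1+\|DU\|_\infty)\,2\|B\|_\infty\le C$, by Lemma \ref{ppp} and the boundedness of $B,\bar B$; the same holds for $\hat\Phi_s$.

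For $K_1$ I would argue deterministically: the semigroup estimate \eref{SP4} with $s-s(\delta)\le\delta$, together with $|\Phi_s|\le C$, gives $|K_1(t)|\le C\delta^\theta\int_0^t(t-s)^{-\theta}ds\le C_T\delta^\theta$ for all $t\le T$, since $\theta\in(0,1)$ by \ref{A4}. As $\delta<1$ and $\alpha\wedge(\beta\gamma)\le1$, this yields $\EE\sup_{t\le T}|K_1(t)|^p\le C_{p,T}\delta^{\theta p}\le C_{p,T}\delta^{\theta[\alpha\wedge(\beta\gamma)]}$. The heart of the matter is $K_2$. Since $\|e^{rA}\|\le1$ by \eref{SP1}, $\sup_{t\le T}|K_2(t)|\le M:=\int_0^T|\Phi_s-\hat\Phi_s|\,ds$, and $M\le CT$ is bounded because the integrand is. The crucial reduction is $M^p\le(CT)^{p-2}M^2$ for $p\ge2$, so it suffices to prove the $L^2$-type bound $\EE M^2\le T\,\EE\int_0^T|\Phi_s-\hat\Phi_s|^2\,ds\le C_T(1+|x|^2)\delta^{\theta[\alpha\wedge(\beta\gamma)]}$; this is exactly what produces the $(1+|x|^2)$ factor uniformly in $p$.

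To estimate $\EE\int_0^T|\Phi_s-\hat\Phi_s|^2\,ds$ I would telescope $\Phi_s-\hat\Phi_s$ into four pieces: (i) $\langle DU(X^\vare_s)-DU(X^\vare_{s(\delta)}),\,B(X^\vare_s,Y^\vare_s)-\bar B(X^\vare_s)\rangle$; and, paired with the bounded factor $I+DU(X^\vare_{s(\delta)})$, (ii) the $x$-increment $B(X^\vare_s,Y^\vare_s)-B(X^\vare_{s(\delta)},Y^\vare_s)$, (iii) the $y$-increment $B(X^\vare_{s(\delta)},Y^\vare_s)-B(X^\vare_{s(\delta)},\hat Y^\vare_s)$, and (iv) $\bar B(X^\vare_s)-\bar B(X^\vare_{s(\delta)})$. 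Piece (i) is bounded both by $\|D^2U\|_\infty|X^\vare_s-X^\vare_{s(\delta)}|$ and by $C$, so by the interpolation $\min(1,u)\le u^{\alpha\wedge(\beta\gamma)}$ its square is $\le C|X^\vare_s-X^\vare_{s(\delta)}|^{2[\alpha\wedge(\beta\gamma)]}$; piece (iv) is $\le C|X^\vare_s-X^\vare_{s(\delta)}|^{\alpha\wedge(\beta\gamma)}$ by the Hölder bound \eref{HObarB} of Lemma \ref{L3.8}; pieces (ii) and (iii) are $\le C|X^\vare_s-X^\vare_{s(\delta)}|^{\alpha}$ and $\le C|Y^\vare_s-\hat Y^\vare_s|^{\beta}$ by \ref{A1}. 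Squaring, integrating, and applying Jensen's inequality (concavity of $u\mapsto u^q$, $q\le1$) with Lemma \ref{COX} converts the $X$-increment contributions into $\delta^{\theta\alpha}$ and $\delta^{\theta[\alpha\wedge(\beta\gamma)]}$, while Lemma \ref{DEY} turns piece (iii) into $\delta^{\theta\gamma\beta}$ with a factor $(1+|x|^{2\gamma})^\beta\le C(1+|x|^2)$. Since $\alpha\ge\alpha\wedge(\beta\gamma)$ and $\gamma\beta\ge\alpha\wedge(\beta\gamma)$, each exponent is at least $\theta[\alpha\wedge(\beta\gamma)]$, so every term is dominated by $\delta^{\theta[\alpha\wedge(\beta\gamma)]}$ (as $\delta<1$), yielding the required $L^2$ bound and hence the lemma after combining with the $K_1$ estimate.

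The main obstacle is securing the full exponent $\theta[\alpha\wedge(\beta\gamma)]$ rather than half of it: a naive $L^1$-in-time estimate would lose a factor of two in the effective Hölder exponent. Carrying out the estimate at the squared level, where the Hölder powers double to $2[\alpha\wedge(\beta\gamma)]$ before Jensen's inequality is applied against the $L^2$ increment bounds of Lemmas \ref{COX} and \ref{DEY}, is precisely what recovers the sharp rate, while the passage from $p=2$ to general $p\ge2$ comes for free from the deterministic boundedness of the fluctuation integrand.
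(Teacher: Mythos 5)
Your proposal is correct and follows essentially the same route as the paper: both reduce $X^\vare_t-\hat X^\vare_t$ to the difference of the last terms of \eref{FX} and \eref{AMX}, split off the semigroup increment via \eref{SP4}, telescope the remaining integrand into the $DU$-increment and the Hölder increments of $B$ and $\bar B$, reduce the $p$-th power to a square using the boundedness of the integrand, and conclude with Jensen's inequality together with Lemmas \ref{COX} and \ref{DEY}. The only cosmetic difference is that you interpolate the $DU$-increment down to the exponent $2[\alpha\wedge(\beta\gamma)]$ whereas the paper keeps it at the exponent $2$; both are dominated by $\delta^{\theta[\alpha\wedge(\beta\gamma)]}$.
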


\begin{proof}
By \eref{FX} and \eref{AMX}, it is to see that
\begin{eqnarray*}
|X^{\vare}_t-\hat{X}^{\vare}_t|\leq\!\!\!\!\!\!\!\!&&\left|\int^t_0 \!\!e^{(t-s)A}\langle D U(X^{\vare}_{s}), B(X^{\vare}_{s}, Y^{\vare}_s)-\bar{B}(X^{\vare}_{s})\rangle\right.\\
&&\quad\quad\left.-e^{(t-s(\delta))A}\langle D U(X^{\vare}_{s(\delta)}), B(X^{\vare}_{s(\delta)}, \hat Y^{\vare}_s)-\bar{B}(X^{\vare}_{s(\delta)})\rangle ds\right|\nonumber\\
&&\!\!+\left|\int^t_0 e^{(t-s)A}\left[B(X^{\vare}_{s},  Y^{\vare}_s)-\bar{B}(X^{\vare}_{s})\right]-e^{(t-s(\delta))A}\left[B(X^{\vare}_{s(\delta)}, \hat Y^{\vare}_s)-\bar{B}(X^{\vare}_{s(\delta)})\right] ds\right|\\
\leq\!\!\!\!\!\!\!\!&&C\int^t_0 \!\!\|e^{(t-s)A}-e^{(t-s(\delta))A}\|ds+C\int^t_0\|D U(X^{\vare}_s)-DU(X^{\vare}_{s(\delta)})\|ds\\
&&+C\int^t_0 \left|B(X^{\vare}_s, Y^{\vare}_s)-\bar{B}(X^{\vare}_s)-B(X^{\vare}_{s(\delta)},\hat Y^{\vare}_s)+\bar{B}(X^{\vare}_{s(\delta)})\right|ds.
\end{eqnarray*}
Then by the boundedness of $D^2 U$ and $B$, the H\"{o}lder continuous of $B$ and $\bar{B}$, property \eref{SP4}, we get
\begin{eqnarray*}
\EE\left(\sup_{t\in[0, T]}|X^{\vare}_t-\hat{X}^{\vare}_t|^p\right)
\leq\!\!\!\!\!\!\!\!&&C_p\delta^{\theta p}\left(\int^T_0 s^{-\theta}ds\right)^p+C_{p,T}\EE\int^T_0|X^{\vare}_s-X^{\vare}_{s(\delta)}|^2 ds\\
&&+C_{p,T}\EE\int^T_0|X^{\vare}_s-X^{\vare}_{s(\delta)}|^{2\alpha}ds+C_{p,T}\EE\int^T_0|X^{\vare}_s-X^{\vare}_{s(\delta)}|^{2(\alpha\wedge(\beta\gamma))}ds\\
&&+C_{p,T}\EE\int^T_0|Y^{\vare}_s-\hat Y^{\vare}_s|^{2\beta}ds
\leq C_{p,T}(1+|x|^2)\delta^{\theta[\alpha\wedge (\beta\gamma)]},
\end{eqnarray*}
where the last inequality follows by Lemmas \ref{COX} and \ref{DEY}.
The proof is complete.
\end{proof}

Now, we give a position to prove our main result.

\vspace{1mm}
\noindent\textbf{Proof of Theorem \ref{main result 1}}: We will divide the proof into three steps.

\noindent\textbf{Step 1}: By \eref{FbarX} and \eref{AMX}, it is easy to see that
\begin{eqnarray*}
\hat{X}^{\vare}_{t}-\bar{X}_{t}=\!\!\!\!\!\!\!\!&&\int^t_0 e^{(t-s)A}\lambda (U(X^{\vare}_s)-U(\bar X_s))ds+U(\bar{X}_t)-U(X^{\vare}_t)\\
&&+\int^t_0 Ae^{(t-s)A}\left[U(\bar{X}_{s})-U(X^{\vare}_{s})\right]ds+\int^t_0 e^{(t-s)A}[DU(X^{\vare}_s)-DU(\bar X_s)]\sqrt{Q_1}dW^1_s\\
&&+\int^t_0 e^{(t-s(\delta))A}\langle DU(X^{\vare}_{s(\delta)})+I, B(X^{\vare}_{s(\delta)},  \hat Y^{\vare}_s)-\bar{B}(X^{\vare}_{s(\delta)})\rangle ds.
\end{eqnarray*}
Then we have the following estimate:
\begin{align}
&\EE\left(\sup_{t\in[0, T]}|\hat{X}^{\vare}_{t}-\bar{X}_{t}|^p\right)\nonumber\\
&\leq C_p\lambda^p T\EE\int^T_0 |U(X^{\vare}_s)-U(\bar X_s)|^pds+C_p\EE\left(\sup_{t\in[0, T]}|U(X^{\vare}_t)-U(\bar X_t)|^p\right)\nonumber\\
&+C_p\EE\left(\sup_{t\in[0, T]}\left|\int^t_0 Ae^{(t-s)A}\Big(U(X^{\vare}_s)-U(\bar X_s)\Big)ds\right|^p\right)\nonumber\\
&+C_p\EE\left(\sup_{t\in[0, T]}\left| \int^t_0 e^{(t-s)A}[DU(X^{\vare}_s)-DU(\bar X_s)]\sqrt{Q_1}dW^1_s\right|^p\right)\nonumber\\
&+C_p\EE\left(\sup_{t\in[0, T]}\left|\int^t_0 e^{(t-s(\delta))A}\langle DU(X^{\vare}_{s(\delta)})+I, B(X^{\vare}_{s(\delta)},  \hat Y^{\vare}_s)-\bar{B}(X^{\vare}_{s(\delta)})\rangle ds\right|^p\right)\nonumber\\
&:=\sum^5_{i=1}J_i(T).\label{J_1-J_5}
\end{align}

For the term $J_1(T)$,  by the H\"{o}lder inequality and \eref{BU} we have
\begin{align}
J_1(T)&\leq C_{p,T}\lambda^p D^p_{\lambda}\|\bar B\|^p_{C^{\alpha\wedge(\beta \gamma)}_b}\int^T_0\EE|X^{\vare}_s-\bar X_s|^p ds.\label{J_1}
\end{align}

For the term $J_2(T)$, using \eref{BU} again, it is easy to see  that
\begin{align}
J_2(T)&\leq C_p D^p_{\lambda}\|\bar B\|^p_{C^{\alpha\wedge(\beta \gamma)}_b}\EE\sup_{t\in [0,T]}|X^{\vare}_t-\bar X_t|^p.\label{J_2}
\end{align}

For the term $J_3(T)$, using the factorization method, for any $\kappa_3 \in (0, \kappa_2)$ in \ref{A5}, we write
$$
\int^t_0 Ae^{(t-s)A}(U(X^{\vare}_s)-U(\bar X_s))ds=\frac{\sin( \pi\kappa_3)}{\pi}\int^t_0 e^{(t-s)A}(t-s)^{\kappa_3-1}f_sds,
$$
where
%$\beta$ is the one in \textbf{(H3)} and
$$
f_s:=\int^s_0 Ae^{(s-r)A}(s-r)^{-\kappa_3}(U(X^{\vare}_r)-U(\bar X_r))dr.
$$
%For any $T>0$, $t\in[0,T]$, $p$ is large enough such that $\frac{2p(1-\beta)}{2p-1}<1$, we have
Choosing $p>1$ large enough such that $\frac{p(1-\kappa_3)}{p-1}<1$, we get
\begin{eqnarray*}
\left|\int^t_0 Ae^{(t-s)A}(U(X^{\vare}_s)-U(\bar X_s))ds\right|\leq\!\!\!\!\!\!\!\!&&C\left(\int^t_0 (t-s)^{-\frac{p(1-\kappa_3)}{p-1}}ds\right)^{\frac{p-1}{p}}|f|_{L^{p}(0,T; H)}\\
\leq\!\!\!\!\!\!\!\!&&C_{p}t^{\kappa_3-\frac{1}{p}}|f|_{L^{p}(0,T; H)},
\end{eqnarray*}
which implies
\begin{eqnarray*}
\sup_{0 \leq t\leq T}\left|\int^t_0 Ae^{(t-s)A}(U(X^{\vare}_s)-U(\bar X_s))ds\right|^{p}\leq\!\!\!\!\!\!\!\!&&C_{p,T}|f|^{p}_{L^{p}(0,T; H)}.
\end{eqnarray*}
Then we can deduce by \eref{BU2} that for $p$ large enough such that $\frac{(1-\kappa_2+\kappa_3)p}{p-1}<1$,
\begin{eqnarray*}
J_3(T)\leq\!\!\!\!\!\!\!\!&&C_{p,T}\EE\int^T_0 |f_t|^p dt\nonumber\\
\leq\!\!\!\!\!\!\!\!&&C_{p,T}\EE\int^T_0\!\!\!\left(\int^t_0 \|(-A)^{1-\kappa_2}e^{(t-r)A}(t-r)^{-\kappa_3}\|^{\frac{p}{p-1}}dr\right)^{p-1}\!\!\!\!\int^t_0 \|(-A)^{\kappa_2}DU\|^p_{\infty} |X^{\vare}_r-\bar X_r|^p dr dt\nonumber\\
\leq\!\!\!\!\!\!\!\!&&C_{p,T}\left(\int^T_0 t^{-\frac{(1-\kappa_2+\kappa_3)p}{p-1}}dt\right)^{p-1}\EE\int^T_0|X^{\vare}_t-\bar X_t|^p dt\nonumber\\
\leq\!\!\!\!\!\!\!\!&&C_{p,T}\EE\int^T_0|X^{\vare}_t-\bar X_t|^p dt.
\end{eqnarray*}

For the term $J_4(T)$, similar as we did in $J_3(T)$, we can prove for $p$ large enough,
\begin{eqnarray*}
J_4(T)\leq\!\!\!\!\!\!\!\!&&C_{p,T}\EE\int^T_0 \left|\int^t_0 e^{(t-s)A}(t-s)^{-\kappa_4}\left[DU(X^{\vare}_s)-DU(\bar X_s)\right]\sqrt{Q_1}dW^1_s\right|^p dt\\
\leq\!\!\!\!\!\!\!\!&&C_{p,T}\int^T_0 \EE\left(\int^t_0 \left\|e^{(t-s)A}(t-s)^{-\kappa_4}\left[DU(X^{\vare}_s)-DU(\bar X_s)\right]\sqrt{Q_1}\right\|^2_{HS}ds \right)^{p/2}dt,
\end{eqnarray*}
where $\kappa_4\in(0,\zeta/2)$, and $\zeta$ is given in \ref{A3}. Note that
\begin{align*}
&\left\|e^{(t-s)A}(t-s)^{-\kappa_4}\Big(DU(X^{\vare}_s)-DU(\bar X_s)\Big)\sqrt{Q_1}\right\|^2_{HS}\nonumber\\
\leq &\sum_{i,j}\langle e^{(t-s)A}(t-s)^{-\kappa_4}\Big(DU(X^{\vare}_s)-DU(\bar X_s)\Big)\sqrt{Q_1} e_i, e_j\rangle^2\nonumber\\
\leq &\sum_{i,j}e^{-2\lambda_j(t-s)}(t-s)^{-2\kappa_4}\left\langle \Big(DU(X^{\vare}_s)-DU(\bar X_s)\Big)\sqrt{Q_1} e_i, e_j\right\rangle^2\nonumber\\
\leq &\sum_{i,j}e^{-2\lambda_j(t-s)}(t-s)^{-2\kappa_4}\left\langle \Big(DU_j(X^{\vare}_s)-DU_j(\bar X_s)\Big), \sqrt{Q_1} e_i\right\rangle^2\nonumber\\
\leq &\sum_{j}e^{-2\lambda_j(t-s)}(t-s)^{-2\kappa_4}\sum_{i}\left\langle \sqrt{Q_1}\Big (DU_j(X^{\vare}_s)-DU_j(\bar X_s)\Big), e_i\right\rangle^2\nonumber\\
\leq &\|Q_1\|\sum_{j}e^{-2\lambda_j(t-s)}(t-s)^{-2\kappa_4}\|D^{2}U_j\|^2_{\infty}|X^{\vare}_s-\bar X_s|^2\nonumber\\
\leq &C\|Q_1\|\|\bar{B}\|^2_{C^{\alpha\wedge(\beta \gamma)}_b}|X^{\vare}_s-\bar X_s|^2\sum_{j}e^{-2\lambda_j(t-s)}(t-s)^{-2\kappa_4},
\end{align*}
where $\langle DU_j(x), y\rangle:=\langle DU(x)y, e_j\rangle$. This and assumption \ref{A3}, we get
\begin{align}
J_4(T)&\leq C_{p,T}\int^T_0 \EE\left(\int^t_0 |X^{\vare}_s-\bar X_s|^2\sum_{j}e^{-2\lambda_j(t-s)}(t-s)^{-2\kappa_4}ds \right)^{p/2}dt\nonumber\\
&\leq  C_{p,T}\int^T_0 \EE\left(\sup_{s\in[0, t]}|X^{\vare}_s-\bar X_s|^p\right)\left(\sum_{j}\int^t_0e^{-2\lambda_j s}s^{-2\kappa_4}ds \right)^{p/2}dt\nonumber\\\
%&\leq C_{p,T}\left(\sum_{j}\lambda^{2\kappa_4-1}_j\right)^{p/2}\left(\int^{\infty}_0 e^{-r}r^{-2\kappa_4}dr\right)^{p/2}\int^T_0 \EE\left(\sup_{s\in[0, t]}|X^{\vare}_s-\bar X_s|^p\right)dt\nonumber\\\
&\leq C_{p,T}\int^T_0 \EE\left(\sup_{s\in[0, t]}|X^{\vare}_s-\bar X_s|^p\right)dt.\label{J_4}
\end{align}

Combining \eref{J_1-J_5}-\eref{J_4}, we get
\begin{align*}
&\EE\left(\sup_{t\in[0,T]}|X^{\vare}_t-\bar X_t|^p\right)\\
\leq & C_p\EE\left(\sup_{t\in[0,T]}|X^{\vare}_t-\hat X^{\vare}_t|^p\right)+C_p\EE\left(\sup_{t\in[0,T]}|\hat X^{\vare}_t-\bar X_t|^p\right)\\
\leq & C_p D^p_{\lambda}\|\bar B\|^p_{C^{\alpha\wedge(\beta \gamma)}_b}\EE\left(\sup_{t\in [0,T]}|X^{\vare}_t-\bar X_t|^p\right)+C_{p,T,\lambda}\int^T_0\EE\left(\sup_{s\in[0,t]}|X^{\vare}_s-\bar X_s|^p\right)dt\\
&+C_p\EE\left(\sup_{t\in[0,T]}|X^{\vare}_t-\hat X^{\vare}_t|^p\right)+\EE J_5(T).
\end{align*}
For any fixed $p,T>0$, since $\lim_{\lambda\rightarrow\infty}D_\lambda=0$, taking $\lambda$ sufficient large such that $C_p D^p_{\lambda}\|\bar B\|^p_{C^{\alpha\wedge(\beta \gamma)}_b}\leq 1/2$,
then we have
\begin{eqnarray*}
\EE\left(\sup_{t\in[0,T]}|X^{\vare}_t-\bar X_t|^p\right)\leq\!\!\!\!\!\!\!\!&& C_{p,T}\int^T_0\EE\left(\sup_{s\in[0,t]}|X^{\vare}_s-\bar X_s|^p\right)dt\\
&&+C_p\EE\left(\sup_{t\in[0,T]}|X^{\vare}_t-\hat X^{\vare}_t|^p\right)+\EE J_5(T).
\end{eqnarray*}
Then the Gronwall inequality yields
\begin{eqnarray}
\EE\left(\sup_{t\in[0,T]}|X^{\vare}_t-\bar X_t|^p\right)\leq C_{p,T}\left[\EE\left(\sup_{t\in[0,T]}|X^{\vare}_t-\hat X^{\vare}_t|^p\right)+\EE J_5(T)\right].\label{L2}
\end{eqnarray}

\noindent\textbf{Step 2:} In this step, we intend to estimate $\EE J_5(T)$.
\begin{eqnarray}
&&\EE J_5(T)\nonumber\\
\leq\!\!\!\!\!\!\!\!&&C_{p,T}\EE\Big[\sup_{t\in[0,T]}\left|\int^t_0 e^{(t-s(\delta))A}\langle DU(X^{\vare}_{s(\delta)})+I, B(X^{\vare}_{s(\delta)},  \hat Y^{\vare}_s)-\bar{B}(X^{\vare}_{s(\delta)})\rangle ds\right|^2\Big]\nonumber\\
\leq\!\!\!\!\!\!\!\!&&C_{p,T}\EE\Big[\sup_{t\in[0,T]}\Big|\sum_{k=0}^{[t/\delta]-1}
\!\!e^{(t-(k+1)\delta)A}\!\!\int_{k\delta}^{(k+1)\delta}\!\!\!\!e^{((k+1)\delta-s(\delta))A}\langle DU(X^{\vare}_{s(\delta)})+I, B(X^{\vare}_{s(\delta)},  \hat Y^{\vare}_s)-\bar{B}(X^{\vare}_{s(\delta)})\rangle ds\nonumber\\
\!\!\!\!\!\!\!\!&&+\int_{t(\delta)}^{t}e^{(t-s(\delta))A}\langle DU(X^{\vare}_{s(\delta)})+I, B(X^{\vare}_{s(\delta)},  \hat Y^{\vare}_s)-\bar{B}(X^{\vare}_{s(\delta)})\rangle ds\Big|^2\Big]\nonumber\\
\leq\!\!\!\!\!\!\!\!&&C_{p,T}\EE\left[\sup_{t\in[0,T]}[t/\delta]\sum_{k=0}^{[t/\delta]-1}
\left|\int_{k\delta}^{(k+1)\delta}\!\!\!e^{((k+1)\delta-s(\delta))A}\langle DU(X^{\vare}_{s(\delta)})+I, B(X^{\vare}_{s(\delta)},  \hat Y^{\vare}_s)-\bar{B}(X^{\vare}_{s(\delta)})\rangle ds\right|^2\right]\nonumber\\
\!\!\!\!\!\!\!\!&&+C_{p,T}\EE\left[\sup_{t\in[0,T]}\left|\int_{t(\delta)}^{t}e^{(t-s(\delta))A}\langle DU(X^{\vare}_{s(\delta)})+I, B(X^{\vare}_{s(\delta)},  \hat Y^{\vare}_s)-\bar{B}(X^{\vare}_{s(\delta)})\rangle ds\right|^2\right]\nonumber\\
:=\!\!\!\!\!\!\!\!&&J_{51}(T)+J_{52}(T).\nonumber
\end{eqnarray}
For the term $J_{52}(T)$, by the boundedness of $B$, $\bar B$ and $DU$, it is easy to see that
\begin{eqnarray}
J_{52}(T)\leq C_{p,T}\delta^{2}.\label{J62}
\end{eqnarray}
For the term $J_{51}(T)$, we have
\begin{eqnarray}
J_{51}(T)\leq\!\!\!\!\!\!\!\!&&C_{p,T}[T/\delta]\sum_{k=0}^{[T/\delta]-1}
\EE\left|\int_{k\delta}^{(k+1)\delta}e^{((k+1)\delta-k\delta)A}\langle DU(X^{\vare}_{k\delta})+I, B(X^{\vare}_{k\delta},  \hat Y^{\vare}_s)-\bar{B}(X^{\vare}_{k\delta})\rangle ds\right|^2\nonumber\\
\leq\!\!\!\!\!\!\!\!&&\frac{C_{p,T}}{\delta^2}\max_{0\leq k\leq[T/\delta]-1}\mathbb{E}\left|\int_{k\delta}^{(k+1)\delta}
B(X^{\vare}_{k\delta},  \hat Y^{\vare}_s)-\bar{B}(X^{\vare}_{k\delta})ds\right|^2\nonumber\\
\leq\!\!\!\!\!\!\!\!&&\frac{C_{p,T}\vare^2}{\delta^2}\max_{0\leq k\leq[T/\delta]-1}\mathbb{E}\left|\int_{0}^{\delta/\vare}
B(X^{\vare}_{k\delta},  \hat Y^{\vare}_{s\vare+k\delta})-\bar{B}(X^{\vare}_{k\delta})ds\right|^2\nonumber\\
=\!\!\!\!\!\!\!\!&&\frac{C_{p,T}\vare^2}{\delta^2}\max_{0\leq k\leq[T/\delta]-1}\int_{0}^{\frac{\delta}{\vare}}
\int_{r}^{\frac{\delta}{\vare}}\Psi_{k}(s,r)dsdr, \label{J61}
\end{eqnarray}
where for any $0\leq r\leq s\leq \frac{\delta}{\vare}$,
\begin{eqnarray*}
\Psi_{k}(s,r):=\!\!\!\!\!\!\!\!&&\mathbb{E}\left[
\langle B(X_{k\delta}^{\vare},\hat{Y}_{s\vare+k\delta}^{\vare})-\bar{B}(X_{k\delta}^{\vare}),
B(X_{k\delta}^{\vare},\hat{Y}_{r\vare+k\delta}^{\vare})-\bar{B}(X_{k\delta}^{\vare})\rangle\right].
\end{eqnarray*}

Then by Proposition \ref{Ergodicity} and following a standard argument (see \cite{LRSX2} for instance), it is easy to see that
\begin{eqnarray}
\Psi_{k}(s,r)\leq\!\!\!\!\!\!\!\!&&C_{T}(|y|^{\beta}+1)e^{-\frac{(s-r)(\lambda_1-L_F)\beta}{2}}.\label{S2}
\end{eqnarray}
As a result, it follows from \eref{J62}-\eref{S2},
\begin{eqnarray}
J_{5}(T)\leq\!\!\!\!\!\!\!\!&& C_{p,T}(1+|y|^{\beta})\frac{\vare^2}{\delta^2}\int_{0}^{\frac{\delta}{\vare}}\int_{r}^{\frac{\delta}{\vare}}e^{-\frac{(s-r)(\lambda_1-L_F)\beta}{2}}dsdr+C_{p,T}\delta^{2}  \nonumber\\
%=\!\!\!\!\!\!\!\!&&C_{p,T}\frac{\vare^2}{\delta^2}\frac{\delta}{(\lambda_1-L_F)\vare}-\frac{1}{(\lambda_1-L_F)^{2}}
%+\frac{1}{(\lambda_1-L_F)^2}e^{-\frac{(\lambda_1-L_F)\delta}{\vare}} +C_{p,T}\delta^{2}  \nonumber\\
\leq\!\!\!\!\!\!\!\!&&C_{p,T}(1+|y|^{\beta})\left(\frac{\vare^2}{\delta^2}+\frac{\vare}{\delta}+\delta^{2}\right).\label{J_5}
\end{eqnarray}

\noindent\textbf{Step 3:} By the preparation above, we intend to finish the proof in this step, by Lemma \ref{DEX}, \eref{L2} and \eref{J_5}, we have
\begin{eqnarray*}
\mathbb{E}\left(\sup_{t\in [0, T]}|X_{t}^{\vare}-\bar X_{t}|^p\right)\leq\!\!\!\!\!\!\!\!&& C_{p,T}(1+|x|^2+|y|^{\beta})\left(\delta^{\theta[\alpha\wedge (\beta\gamma)]}+\frac{\vare^2}{\delta^2}+\frac{\vare}{\delta}+\delta^{2}\right).
\end{eqnarray*}
Then it follows by taking $\delta=\vare^{\frac{1}{\theta[\alpha\wedge (\beta\gamma)]+1}}$,
\begin{eqnarray*}
\mathbb{E}\left(\sup_{t\in [0, T]}|X_{t}^{\vare}-\bar X_{t}|^p\right)\leq C_{p,T}(1+|x|^2+|y|^{\beta})\vare^{\frac{\theta[\alpha\wedge (\beta\gamma)]}{\theta[\alpha\wedge (\beta\gamma)]+1}},
\end{eqnarray*}
which implies the desired result. The whole proof is complete.

\section{Application to example}\label{Sec Exs}

In this section we will apply our main result to establish the averaging principle for a class of slow-fast SPDEs with H\"{o}lder continuous coefficients.
i.e., considering the following non-linear stochastic heat equation on $D=[0,\pi]$ with Dirichlet boundary conditions:
\begin{equation}\left\{\begin{array}{l}\label{Ex}
\displaystyle
dX^{\varepsilon}(t,\xi)=\left[\Delta X^{\varepsilon}(t,\xi)
+B(X^{\varepsilon}(t,\cdot), Y^{\varepsilon}(t,\cdot))(\xi)\right]dt+(-\Delta)^{-r_1 /2}d W^1(t,\xi),\\
\displaystyle dY^{\varepsilon}(t,\xi)=\frac{1}{\varepsilon}\left[\Delta Y^{\varepsilon}(t,\xi)+F(X^{\varepsilon}(t,\cdot), Y^{\varepsilon}(t,\cdot))(\xi)\right]dt
+\frac{1}{\sqrt{\varepsilon}}(-\Delta)^{-r_2 /2}d W^2(t,\xi),\\
X^{\vare}(t,\xi)=Y^{\vare}(t,\xi)=0, \quad t> 0,\quad \xi\in\partial D,\\
X^{\vare}(0,\xi)=x(\xi),Y^{\vare}(0,\xi)=y(\xi)\quad \xi\in D, x,y\in H,\end{array}\right.
\end{equation}
where $\partial D$ the boundary of $D$, $W^1_t$ and $W^2_t$ are two cylindrical Wiener process in $H:=L^2(D)$ (with Dirichlet boundary conditions). Put
\begin{eqnarray*}
&&Ax=\Delta x,\quad x\in \mathscr{D}(A)=H^{2}(D) \cap H^1_0(D);\\
&&Q_1=(-\Delta)^{-r_1},\quad Q_2=(-\Delta)^{-r_2},\quad r_1,r_2\in (0, 1/7);\\
&&B(x,y)(\xi)=\sin(\sqrt{|x(\xi)|}+\sqrt{|y(\xi)|}),\quad x,y\in H;\\
&&F(x,y)(\xi)=1/2\cos(\sqrt{|x(\xi)|}+|y(\xi)|),\quad x,y\in H.
\end{eqnarray*}
Then it is easy to check that $B$ and $F$ satisfy the assumption \ref{A1}.

The operator $A$ is a self-adjoint operator and possesses a complete orthonormal system of eigenfunctions, namely
$$e_k(\xi)=(\sqrt{2/\pi})\sin(k\xi),\quad\xi\in[0,\pi],$$
where $k\in \mathbb{N}$. The corresponding eigenvalues of $A$ are $-\lambda_k$ with $\lambda_k=k^{2}$. As a result, it is easy to see assumptions \ref{A2} and \ref{A6} are satisfied. Moreover, assumption \ref{A3} holds for any $\zeta\in(0,1/2)$.

For the assumption \ref{A4}, we first note that for $i=1,2$,
$$Q_i(t)=\int^t_0 e^{sA}(-A)^{-r_i}e^{sA^{*}}ds=\frac{1}{2}(-A)^{-(r_i+1)}(I-e^{2tA}).$$
So $Q_i(t)$ is a trace class operator if
$$
\sum^{\infty}_{k=1}\frac{1}{\lambda_k^{r_i+1}}<\infty,
$$
which holds by $r_i>-1/2$.

By a straightforward computer, for any $\theta\in(0, r_1+1/2)$,
$$
\int^T_0 r^{-\theta}\|e^{rA}\sqrt{Q_1}\|^2_{HS}dr\leq \sum^{\infty}_{k=1}\frac{1}{\lambda^{r_1+1-\theta}_k}<\infty,\quad \forall T>0.
$$
$$
\int^T_0 \|(-A)^{\theta/2}e^{rA}\sqrt{Q_1}\|^2_{HS}dr\leq \sum^{\infty}_{k=1}\frac{1}{\lambda^{r_1+1-\theta}_k}<\infty,\quad \forall T>0.
$$
$$
\int^{\infty}_0 \|e^{rA}\sqrt{Q_2}\|^2_{HS}dr\leq \sum^{\infty}_{k=1}\frac{1}{\lambda^{r_2+1}_k}<\infty,
$$
which imply the conditions \eref{A41}-\eref{A43} hold.

To show assumption \ref{A5}, note that
$$
\Lambda_i(t)=Q_i^{-1/2}(t)e^{t A}=\frac{1}{2}(-A)^{(1+r_i)/2}(I-e^{2tA})^{-1/2}e^{tA}.
$$
Then $\Lambda_i(t)$ is a bounded operator for any $t>0$, in fact,
\begin{eqnarray*}
\|\Lambda_i(t)\|=\!\!\!\!\!\!\!\!&&\|\frac{1}{2}(-A)^{(1+r_i)/2}(I-e^{2tA})^{-1/2}e^{tA}\|\\
=\!\!\!\!\!\!\!\!&&\frac{1}{2}t^{-(1+r_i)/2}\|(-tA)^{(1+r_i)/2}(I-e^{2tA})^{-1/2}e^{tA}\|\\
\leq\!\!\!\!\!\!\!\!&&Ct^{-(1+r_i)/2},
\end{eqnarray*}
where we use the fact that the operator $(-tA)^{(1+r_i)/2}(I-e^{2tA})^{-1/2}e^{tA}$ is uniformly bounded because the function $s\rightarrow s^{(1+r_i)/2}(1-e^{-2s})^{-1/2}e^{-s}$ is bounded on $(0,\infty)$.

Furthermore, for any $\lambda>0$
$$
\int^{\infty}_0 e^{-\lambda t}\|\Lambda_i(t)\|^{1+\kappa_1}dt\leq C\int^{\infty}_0 e^{-\lambda t}t^{-(1+r_i)(1+\kappa_1)/2}dt<\infty
$$
holds for $\kappa_1=3/4$ due to $r_i<1/7$. Therefore the condition \eref{A50} holds.

By a similar arguments, we also have for any $\kappa_2\in \left(0, \frac{(1-r_1)}{2}\right)$,
$$
\int^{\infty}_0 e^{-\lambda t}\|(-A)^{\kappa_2}\Lambda_1(t)\|dt\leq C\int^{\infty}_0 e^{-\lambda t}t^{-(1+r_1+2\kappa_2)/2}dt<\infty,
$$
which verifies the assumption \eref{A51} holds. Consequently, by Theorem \ref{main result 1}, the slow component $X^{\vare}$ of the stochastic system \eref{Ex} strongly convergence to the solution $\bar{X}$ of the corresponding averaged equation.

\vspace{0.3cm}
\textbf{Acknowledgment}. This work is supported by the NNSF of China (11601196, 11701233, 11771187, 11931004), NSF of Jiangsu (BK20170226) and the Project Funded by the Priority Academic Program Development of Jiangsu Higher Education Institutions.

\vspace{0.3cm}

\end{document}